\documentclass{amsart}
\usepackage{amssymb,amsmath,amsthm,xypic,enumerate,color}

\title[Extending obstructions to noncommutative functorial spectra]{Extending obstructions to\\ noncommutative functorial spectra}
\author{Benno van den Berg}
\address{ILLC, Universiteit van Amsterdam, P.O. Box 94242, 1090 GE Amsterdam, the Netherlands}
\email{bennovdberg@gmail.com}
\author{Chris Heunen}
\thanks{Benno van den Berg was supported by the Netherlands
  Organisation for Scientific Research (NWO). Chris Heunen was
  supported by the Office of Naval Research under grant N000141010357 and by the Engineering and Physical Sciences Research Council
Fellowship EP/L002388/1.}
\address{Department of Computer Science, University of Oxford, Wolfson
  Building, Parks Road, OX1 3QD, Oxford, UK}
\email{heunen@cs.ox.ac.uk}
\date{July 9, 2014}
\subjclass[2010]{16B50, 46L85}

\newcommand{\cC}{\ensuremath{\mathcal{C}}}
\newcommand{\Cat}[1]{\ensuremath{\mathbf{#1}}}
\newcommand{\Mn}[2][n]{\ensuremath{\mathbb{M}_{#1}(#2)}}
\newcommand{\MnC}[1][n]{\Mn[#1]{\mathbb{C}}}

\newcommand{\Loc}{\Cat{Loc}}
\newcommand{\Top}{\Cat{Top}}
\newcommand{\Topos}{\Cat{Topos}}

\newcommand{\commeas}{\ensuremath{\odot}}
\newcommand{\norm}[1]{\left|\!\left| #1 \right|\!\right|}
\newcommand{\field}[1]{\ensuremath{\mathbb{#1}}}

\theoremstyle{plain}
\newtheorem{theorem}{Theorem}[section]
\newtheorem{corollary}[theorem]{Corollary}
\newtheorem{lemma}[theorem]{Lemma}
\newtheorem{proposition}[theorem]{Proposition}
\theoremstyle{definition}
\newtheorem{definition}[theorem]{Definition}
\newtheorem{example}[theorem]{Example}
\newtheorem{remark}[theorem]{Remark}

\newcommand{\cat}[1]{\ensuremath{\mathbf{#1}}}
\newcommand{\op}{\ensuremath{^{\mathrm{op}}}}
\newcommand{\C}{\ensuremath{\mathbb{C}}}
\DeclareMathOperator{\colim}{colim}
\DeclareMathOperator{\card}{card}
\DeclareMathOperator{\Proj}{Proj}

\DeclareMathOperator{\Spec}{Spec}
\DeclareMathOperator{\Sh}{Sh}

\newdir{|>}{!/2pt/@{|}*@^{>}*\dir_{>}}
\newdir{ >}{{}*!/-5pt/\dir{>}}

\begin{document}
\maketitle

\begin{abstract}
  Any functor from the category of C*-algebras to the category of
  locales that assigns to each commutative C*-algebra its Gelfand
  spectrum must be trivial on algebras of $n$-by-$n$ matrices for $n
  \geq 3$. This obstruction also applies to other spectra such as
  those named after Zariski, Stone, and Pierce. We extend
  these no-go results to functors with values in (ringed) topological spaces,
  (ringed) toposes, schemes, and quantales.
  The possibility of spectra in other categories is discussed.
\end{abstract}

\section{Introduction}

The spectrum of a commutative ring is a leading tool of commutative
algebra and algebraic geometry. For example, a commutative ring can be
reconstructed using (among other ingredients) its Zariski spectrum, a coherent topological
space. Spectra are also of
central importance to functional analysis and operator algebra. For
example, there is a dual equivalence between the category of
commutative C*-algebras and compact Hausdorff topological spaces, due
to Gelfand.\footnote{For convenience, we take all rings and
  C*-algebras to be unital, although that is not essential.}

A natural question is whether such spectra can be extended to the
noncommutative setting.
Indeed, many candidates have been proposed for noncommutative spectra.
In a recent article~\cite{reyes:onextensions}, M.\ L.\ Reyes observed
that none of the proposed spectra behave functorially, and proved that
indeed they cannot, on pain of trivializing on the prototypical
noncommutative rings $M_n(\C)$ of $n$-by-$n$ matrices with complex entries.
To be precise: any functor $F \colon \Cat{Ring}\op \to \Cat{Set}$ that
satisfies $F(C)=\Spec(C)$ for commutative rings $C$, must also satisfy
$F(M_n(\C))=\emptyset$ for $n\geq 3$.~\footnote{The rings $\MnC$ and $\C$ are Morita-equivalent, and so behave similarly in many ways. But for our purposes they are very different: for $n=3$ the theorems hold, for $n=2$ they do not.}
This result shows in a strong way why the traditional notion
of topological space is inadequate to host a good notion of
noncommutative spectrum. Its somewhat elaborate proof
is based on the Kochen--Specker Theorem \cite{kochenspecker:hiddenvariables}. It is remarkable that a
theorem from mathematical physics would have
something to say about all possible rings.

One could hope that less orthodox notions of space are less
susceptible to this obstruction. In particular, there are notions of
space, such as that of a locale or a topos, in which the notion of
point plays a subordinate role. In fact, one of the messages of locale
theory and topos theory is that one can have spaces with a rich
topological structure, but without any points whatsoever. Indeed, many
of the proposed candidate spectra for noncommutative C*-algebras have
been, or could be, phrased in such terms.

The main result of this article is that the obstruction cannot be
circumvented in this way. We will rule out many candidates for
categories of noncommutative Gelfand spectra by deriving various no-go
theorems for locales, toposes, ringed toposes, and even quantales.
Additionally, we prove similar limitative results for Zariski, Stone
and Pierce spectra.
These results will all follow from two basic ingredients.
The first is the Kochen--Specker Theorem, as in~\cite{reyes:onextensions}.
The second is a general extension theorem, prompted by our work
in~\cite{vdbergheunen:colim}, that allows us both to significantly simplify and extend Reyes' argument.

The basic obstruction is given by the Kochen--Specker Theorem. It
relates Boolean algebras to a certain noncommutative notion of Boolean
algebra. More precisely, it can be rephrased to say that any morphism
of so-called partial Boolean algebras, from the projections in $\MnC$ to a Boolean
algebra, must trivialize when $n \geq 3$.

The general extension theorem, as its name suggests, uses some simple category theory to extend this basic obstruction to far more general situations. To see how it works, consider the following commuting diagram of functors and categories.
\[\xymatrix{
  \cat{C} \ar^-{S}[r] \ar@{^{(}->}[d] & \cat{S} \ar^-{?}[d] \\
  \cat{R} \ar_-{?}[r] & \cat{?}
}\]
Here, $\cat{R}$ consists of a kind of rings, $\cat{C}$ is the full
subcategory of commutative ones, the functor $S$ takes the spectrum,
and $\cat{S}$ consists of the spectral spaces. The goal is to extend
$S$ to the noncommutative setting.
The extension theorem will state that, as long as the functor on the
right-hand side preserves limits, % the basic obstruction will apply, that is,
the bottom functor must degenerate.
Regarded this way, one could say that what the Kochen--Specker Theorem
obstructs, is transporting $S$ along functors whose images
have the same limit behaviour.

The paper is structured as follows. First, Section~\ref{sec:locales} motivates why it is a priori
not unreasonable to look to pointless topology for noncommutative spectra.
Section~\ref{sec:obstruction} recalls the Kochen--Specker Theorem and several
variations. Section~\ref{sec:abstract} then sets the stage with the
abstract results. After that, Sections~\ref{sec:gelfand}--\ref{sec:stone} draw corollaries of
interest from these main theorems. This host of impossibility results does
not mean that it is hopeless to search for a good notion of
noncommutative spectrum. We end the paper on a positive note by
discussing ways of circumventing the obstruction in
Section~\ref{sec:circumventing}, that will hopefully serve as a guide
towards the `right' generalization of the notion of space.

\section{Pointfree topology}\label{sec:locales}

The idea of a form of topology in which the notion of an open (or a region in space) is primary and the notion of a point plays a subordinate role dates back at least to Whitehead \cite{whitehead:anenquiry, whitehead:theconcept}. For a long time these ideas remained quite philosophical in nature and belonged to the periphery of mathematics. But this changed with the work of Grothendieck \cite{SGAIV}. The notion of a \emph{topos}, which he seems to have regarded as his most profound idea, is really a pointfree concept of a space. By now it is clear that a mathematically viable theory of pointfree spaces is possible and with topos theory this has reached a considerable degree of maturity and sophistication~\cite{johnstone:elephant, maclanemoerdijk92}.

Within the category of toposes the \emph{localic toposes} play an important role. Here they will be important because toposes that arise as spectra are localic. We will define these toposes in Definition~\ref{sheavesonalocale} below; we will have no need to consider toposes that are not localic. To define these localic toposes, the crucial observation is that in the construction of the category of sheaves over a topological spaces, the points of the space play no role. Indeed, all that matters is the structure of the lattice of opens of the space. So, to define a category of sheaves, one only needs a suitable lattice-theoretic structure. The precise structure required is formalized by the concept of a \emph{locale}, which is an important notion of pointfree space in its own right~\cite{johnstone:stonespaces,johnstone:point}.

\begin{definition} \label{locale}
  A \emph{complete lattice} is a partially ordered set of which arbitrary subsets have a least upper
  bound. In a complete lattice every subset also has a greatest lower bound. A \emph{locale} is a complete lattice that
  satisfies the following infinitary
  distributive law:
  \[
  \bigvee (x \wedge y_i) = x \wedge \bigvee y_i.
  \]
  The elements of a locale are called \emph{opens}.
  A morphism $K \to L$ of locales is a function $f \colon L \to K$ that
  satisfies $f(x \wedge y) = f(x) \wedge f(y)$ and $f(\bigvee x_i) =
  \bigvee f(x_i)$. (Note the change in direction!) This forms a category $\Cat{Loc}$.
\end{definition}

The primary example of a locale is the collection of open subsets of a
topological space. Moreover, a continuous function between topological
spaces induces a morphism between the corresponding locales (in the
same direction). Thus we have a functor $\Cat{Top} \to \Cat{Loc}$.

As it happens, this functor has a right adjoint. To construct it, define a \emph{point} of a locale $L$ as a morphism $p \colon 1 \to L$. Here, $1$ is the terminal object in the category of locales, which coincides with the set of open sets of a singleton topological space. The set of
points of $L$ may be topologized in a natural way, by declaring its
open sets to be those of the form $\{ p \mid p^{-1}(U)=1 \}$ for opens
$U$ in $L$. This defines the right adjoint $\Cat{Loc} \to \Cat{Top}$.

As usual, this adjunction becomes an equivalence if we restrict to the full subcategories of those locales and spaces for which the unit and counit are isomorphisms. These are called the \emph{spatial locales} and \emph{sober spaces}, respectively. For topological spaces, sobriety is really a weak separation property (for example, any Hausdorff topological space is sober). Thus, locales and topological spaces are closely related.

There are, however, a few subtle differences. One of the most important is that in the category of locales, limits are computed differently than in the category of topological spaces. This is one of the reasons why one might suspect that a good pointfree notion of spectrum may be possible. In fact, the following considerations may lead one to hope that a suitable notion of a pointfree space could avoid the obstruction observed by Reyes:
\begin{enumerate}
\item Many notions of spectrum lend themselves quite naturally to a pointfree formulation~\cite{johnstone:stonespaces}.
\item In many cases points correspond to maximal ideals. It is well-known that these behave very poorly functorially.
\item Limits play an important role in Reyes' result, and here as well. But limits are computed differently in topological spaces and locales. (In fact, this aspect of locales is emphasized in \cite{johnstone:point}.)
\end{enumerate}
But, as we will see below, the obstruction to nonfunctorial spectra is so fundamental that it precludes suitable notions of spectra in locales and toposes as well.

The problem is with point (3). Although limits in $\Loc$ and $\Top$ are computed differently \emph{in general}, this is not what happens with limits of locales and topological spaces that arise as spectra. There, the constructions move perfectly in tandem. This follows from the fact that locales that arise as spectra are (i) closed under limits and (ii) spatial. In fact, (i) alone already precludes the existence of suitable spectra in the category of locales, as Section~\ref{sec:abstract} below will make clear. For this reason, we will only explain (i) in some detail here.

\begin{remark} \label{constructiveness}
  Proving that locales that arise as spectra are spatial relies on nonconstructive principles, such as the Prime Ideal Theorem (a consequence of the axiom of choice). In fact, the arguments in this paper are mostly constructive: only the proofs in Section~\ref{sec:zariski} rely on results that might not be valid constructively. (That the locale-theoretic analogues of nonconstructive results in topology often are constructively valid is another aspect of locale theory emphasized in~\cite{johnstone:point}.)
\end{remark}

For example, Gelfand duality concerns compact Hausdorff spaces. Being Hausdorff is something which is rather hard to express in localic terms: but, fortunately, for compact spaces being Hausdorff is equivalent to being regular, and regularity is more readily expressed in localic terms~\cite[page~80]{johnstone:stonespaces}.

\begin{definition}\label{compactandregular}
  A locale $L$ is called \emph{compact} if any subset $S \subseteq L$ whose least upper bound is the top element has a finite subset whose least upper bound is also the top element.

  If $a$ and $b$ are two elements of a locale $L$, then $a$ is \emph{well inside} $b$ if $c \wedge a = 0$ and $c \vee b = 1$ for some $c \in L$. A locale $L$ is called \emph{regular} if any $a \in L$ is the least upper bound of the elements well inside it.
\end{definition}

\begin{lemma}\label{lem:krlocclosedunderlimits}
  Compact regular locales are closed under limits in $\Loc$.
\end{lemma}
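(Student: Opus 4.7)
The plan is to reduce closure under arbitrary small limits to closure under products and equalizers, by the usual decomposition of limits in a complete category (and $\Loc$ is complete, since $\Loc\op$ is the category of algebras for an infinitary equational theory and so has all colimits). It thus suffices to verify three things: (a) the terminal locale $1$ is compact regular; (b) the product in $\Loc$ of a family of compact regular locales is compact regular; (c) the equalizer in $\Loc$ of a parallel pair between compact regular locales is compact regular.

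Item (a) is immediate, since $1$ is the two-element frame. For (b), compactness of the product is the Tychonoff theorem for compact regular locales, which, in contrast to its topological counterpart, admits a constructive proof. Regularity of the product can be read off the basic opens arising from the frame-theoretic coproduct: a basic open of the form $\bigwedge_{i \in F} a_i$, with $F$ finite and $a_i$ an open of $L_i$, is well inside $\bigwedge_{i \in F} b_i$ whenever each $a_i$ is well inside $b_i$ in $L_i$, since witnesses in each factor combine to a witness in the product. Every open of the product being a join of such basic opens, regularity follows.

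For (c), the key input is that a compact regular locale $L$ is Hausdorff, meaning its diagonal $\Delta \colon L \to L \times L$ is a closed sublocale. Hence for any parallel pair $f, g \colon K \to L$, the equalizer, being the pullback of $\Delta$ along $\langle f, g \rangle$, is a closed sublocale of $K$. Closed sublocales of compact locales are compact, and every sublocale of a regular locale is regular; so the equalizer is compact and regular.

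The main obstacle is the Tychonoff theorem in step (b); the Hausdorffness used in step (c), together with the permanence properties of compactness and regularity under (closed) sublocales, are standard results of locale theory and present no essential difficulty.
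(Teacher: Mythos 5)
Your proof is correct, but it takes a genuinely different route from the paper. The paper disposes of the lemma in one line: the inclusion $\Cat{KRLoc} \hookrightarrow \Loc$ has a left adjoint (the localic Stone--\v{C}ech compactification), and a full reflective subcategory is automatically closed under all limits that exist in the ambient category. You instead decompose limits into products and equalizers and verify closure by hand: the localic Tychonoff theorem for compactness of products, the combination-of-witnesses argument for regularity of products (which is fine --- well-insideness of the cylinder generators does propagate as you say, and monotonicity of ``well inside'' in its second argument finishes it), and for equalizers the closedness of the diagonal of a regular locale (compactness is not actually needed for that step), together with the facts that closed sublocales of compact locales are compact and that regularity is hereditary for sublocales. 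What each approach buys: the paper's argument is a single appeal to a standard adjunction and hides all the work in the construction of the localic Stone--\v{C}ech compactification, whereas yours is more concrete and makes visible exactly which permanence properties of compactness and regularity are being used, at the cost of invoking the localic Tychonoff theorem, itself a nontrivial result (though, as you note, constructive in the compact regular case --- so your route is consistent with Remark~\ref{constructiveness}, as is the paper's, since the localic Stone--\v{C}ech compactification is also constructively available).
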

\begin{proof}
 This follows from the fact that the the inclusion of the full subcategory $\Cat{KRLoc}$ of compact regular locales inside the category of locales has a left adjoint (namely the Stone-\v{C}ech compactification, see \cite[page~130 and page~88]{johnstone:stonespaces}).
\end{proof}

Stone duality is a duality between Boolean algebras and Stone spaces. To define the localic version of Stone spaces, observe that if $D$ is a distributive lattice, then the collection $\mathrm{Idl}(D)$ of ideals on $D$ (ordered by inclusion) is a locale. In fact, this construction is part of a functor
\[ \mathrm{Idl} \colon \cat{DLat}\op \to \Loc \]
sending ideals to the down closure of their direct images along maps of distributive lattices. This functor is faithful, but not full.

\begin{definition} \label{coherentlocale}
  A \emph{coherent locale} is one equivalent to one of the form $\mathrm{Idl}(D)$. Any coherent locale is compact; if it is also regular, we call it a \emph{Stone locale}. A map between coherent locales that is isomorphic to one in the image of the functor $\mathrm{Idl}$ is called \emph{coherent}.
\end{definition}

\begin{lemma}\label{lem:coherentlimits}
  If a diagram in $\Loc$ consists of coherent locales and coherent morphisms between them, then its limit is again a coherent locale.
\end{lemma}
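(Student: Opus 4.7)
The plan is to exploit the fact that $\mathrm{Idl}$ effectively presents coherent locales as the opposite of a category with all small colimits, namely distributive lattices. The key point is that $\mathrm{Idl} \colon \cat{DLat}\op \to \Loc$ is a right adjoint, and so preserves limits.

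First I would recall (from \cite[Chapter~II]{johnstone:stonespaces}) the universal property that makes $\mathrm{Idl}(D)$ the \emph{free} frame on the distributive lattice $D$: for any locale $M$, there is a natural bijection
\[
\Loc(M, \mathrm{Idl}(D)) \;\cong\; \cat{DLat}(D, \mathcal{O}(M)),
\]
where $\mathcal{O}(M)$ is the frame of opens of $M$ regarded merely as a distributive lattice. This exhibits $\mathrm{Idl} \colon \cat{DLat}\op \to \Loc$ as right adjoint to the functor $\mathcal{O} \colon \Loc \to \cat{DLat}\op$. Consequently $\mathrm{Idl}$ preserves all small limits, i.e.\ it carries colimits of distributive lattices to limits of locales.

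Next, given a diagram of coherent locales $L_i$ and coherent morphisms between them, I would use the definition of coherent morphism to lift the diagram through $\mathrm{Idl}$: choose distributive lattices $D_i$ with $L_i \cong \mathrm{Idl}(D_i)$ and lattice maps between them whose images under $\mathrm{Idl}$ are (isomorphic to) the given coherent maps. This yields a diagram in $\cat{DLat}\op$. Since $\cat{DLat}$ is a finitary variety of algebras, it has all small colimits, so the lifted diagram has a limit $D_\infty$ in $\cat{DLat}\op$. Applying $\mathrm{Idl}$ and invoking its preservation of limits gives
\[
\mathrm{Idl}(D_\infty) \;\cong\; \lim_i \mathrm{Idl}(D_i) \;\cong\; \lim_i L_i
\]
in $\Loc$, so the limit is coherent by construction.

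The main point that needs care is the first step: one must verify that the adjunction truly gives limits in $\Loc$ (rather than merely in some subcategory of coherent locales), which hinges on the universal property of $\mathrm{Idl}(D)$ holding against \emph{arbitrary} locales $M$ as test objects, not just coherent ones. Once this universal property is in place, the remainder of the argument is formal, amounting to the observation that right adjoints preserve limits combined with the cocompleteness of $\cat{DLat}$ as a variety. I would likely simply cite the relevant passages of \cite{johnstone:stonespaces} for the universal property rather than reprove it.
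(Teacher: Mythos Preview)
Your proposal is correct and is essentially the same argument as the paper's: both hinge on $\mathrm{Idl}\colon\cat{DLat}\op\to\Loc$ being right adjoint to the forgetful functor (hence preserving limits), together with the ability to lift a diagram of coherent locales and coherent maps back to $\cat{DLat}\op$ and the cocompleteness of $\cat{DLat}$. The paper compresses this into a single sentence citing Johnstone, while you spell out the mechanism, but there is no substantive difference in strategy.
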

\begin{proof}
  This follows from the fact that $\mathrm{Idl} \colon \cat{DLat}\op \to \Loc$ is faithful and right adjoint to the forgetful functor~\cite[page~59]{johnstone:stonespaces}.
\end{proof}

\begin{lemma} \label{lem:stonelimits}
  Stone locales are closed under limits in $\Loc$.
\end{lemma}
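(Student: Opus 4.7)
The plan is to reduce the lemma to the two preceding lemmas. A Stone locale is by definition a coherent (hence compact) regular locale, so Lemma~\ref{lem:krlocclosedunderlimits} immediately tells us that the limit in $\Loc$ of any diagram of Stone locales is compact and regular. What remains is to show that this limit is also coherent, and for that I would like to invoke Lemma~\ref{lem:coherentlimits}.

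The key ingredient is to show that every locale morphism between Stone locales is automatically coherent, so that any diagram of Stone locales may be regarded as a diagram in the image of $\mathrm{Idl}$. Given $f \colon \mathrm{Idl}(B) \to \mathrm{Idl}(B')$ with corresponding frame morphism $f^* \colon \mathrm{Idl}(B') \to \mathrm{Idl}(B)$, the identities $f^*(a) \wedge f^*(\neg a) = f^*(0) = 0$ and $f^*(a) \vee f^*(\neg a) = f^*(1) = 1$ show that $f^*$ sends each complemented element $a \in \mathrm{Idl}(B')$ to a complemented one in $\mathrm{Idl}(B)$. In a Stone locale the complemented opens coincide with the compact opens and constitute the generating Boolean algebra ($B'$, respectively $B$), so $f^*$ restricts to a Boolean---and in particular distributive lattice---morphism $h \colon B' \to B$. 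Since $f^*$ preserves directed joins and every ideal is the directed join of its principal sub-ideals, $f^*$ is completely determined by $h$ and agrees with $\mathrm{Idl}(h)^*$; hence $f = \mathrm{Idl}(h)$ is coherent.

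Combining the two lemmas now gives the result: the limit of a diagram of Stone locales is coherent by Lemma~\ref{lem:coherentlimits} and regular by Lemma~\ref{lem:krlocclosedunderlimits}, hence Stone. The main obstacle is the intermediate step of verifying the automatic coherence of morphisms between Stone locales; it is the only part of the argument that is not formal, and it is essentially a localic incarnation of Stone duality.
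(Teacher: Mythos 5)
Your proof is correct and follows the same route as the paper: combine Lemma~\ref{lem:krlocclosedunderlimits} and Lemma~\ref{lem:coherentlimits} with the fact that every locale map between Stone locales is coherent. The only difference is that the paper simply cites this last fact from Johnstone's \emph{Stone Spaces} (page~71), whereas you sketch its (correct) proof via preservation of complemented elements and the identification of complemented with compact opens in a Stone locale.
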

\begin{proof}
  This follows from Lemmas~\ref{lem:krlocclosedunderlimits} and~\ref{lem:coherentlimits}, together with the fact that every map between Stone locales is coherent~\cite[page~71]{johnstone:stonespaces}.
\end{proof}

As mentioned before, these results will preclude the existence of functorial spectra in the category of locales. They will also preclude the existence of functorial spectra in the category of toposes. Before we can explain that, let us first indicate how one can define a category of sheaves on a locale.
\begin{definition} \label{sheavesonalocale}
  A \emph{presheaf} on a locale $L$ is a functor $X \colon L\op \to \Cat{Sets}$. More concretely, a presheaf consists of a family of sets $(X(p))_{p \in L}$ together with for any $q \leq p$ a restriction operation \[ (-) \upharpoonright q: X(p) \to X(q) \] satisfying some natural compatibility conditions.

  A presheaf $X$ is a \emph{sheaf} when for any family of elements $\{ p_i \in L \mid i \in I \}$ and $\{ x_i \in X(p_i) \mid i \in I \}$ with $x_i \upharpoonright p_i \wedge p_j = x_j \upharpoonright p_i \wedge p_j$ for all $i, j \in I$ there is a unique element $x \in X(\bigvee p_i)$ with $x \upharpoonright p_i = x_i$ for every $i \in I$.

  For any locale $L$ the sheaves on $L$, with natural transformations between them, form a topos $\Sh(L)$. A topos which is equivalent to one of this form is called \emph{localic}.
\end{definition}

The construction of taking sheaves on a locale is functorial. The crucial result that will preclude noncommutative spectra valued in toposes is the following.

\begin{lemma} \label{lem:Shpreserveslimits}
  There is a full and faithful functor $\Sh \colon \Loc \to \Topos$ that assigns to every locale the category of sheaves over that locale. It preserves limits.
\end{lemma}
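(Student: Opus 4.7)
The plan is to split the lemma into three tasks: define $\Sh$ on morphisms, check that it is full and faithful, and show that it preserves limits. Most of the work will be invoking standard topos-theoretic facts, so I would mainly explain which ones and why they suffice.

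First I would define $\Sh$ on morphisms. A morphism $f \colon L \to M$ in $\Loc$ is, by Definition~\ref{locale}, a frame homomorphism $f^{-1} \colon M \to L$. This homomorphism determines an adjoint pair $f^* \dashv f_*$ between presheaf categories (by left Kan extension along $f^{-1}$ and restriction, respectively); sheafifying $f^*$ and noting that $f_*$ already preserves sheaves yields a geometric morphism $\Sh(L) \to \Sh(M)$. Functoriality follows from the uniqueness of adjoints and the universal property of sheafification. For fullness and faithfulness I would then appeal to the classical correspondence between geometric morphisms of localic toposes and frame homomorphisms, which embeds $\Loc$ as a full sub-$2$-category of $\Topos$ (see \cite[C1.4]{johnstone:elephant} or \cite[IX.5]{maclanemoerdijk92}); since the $2$-cells on the locale side are trivial, this specialises to a bijection on hom-sets.

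For preservation of limits the cleanest route is to exhibit $\Sh$ as a right adjoint. The left adjoint is the \emph{localic reflection}, obtained from the hyperconnected--localic factorization of the global-sections geometric morphism: every topos $\mathcal{E}$ factors canonically as $\mathcal{E} \to \mathcal{E}_{\mathrm{loc}} \to \Cat{Sets}$ with the first arrow hyperconnected and the second localic, and $\mathcal{E}_{\mathrm{loc}} \simeq \Sh(L_{\mathcal{E}})$ for the locale $L_{\mathcal{E}}$ whose frame is the poset of subterminal objects of $\mathcal{E}_{\mathrm{loc}}$. The universal property of this factorization shows that $\mathcal{E} \mapsto L_{\mathcal{E}}$ is left adjoint to $\Sh$, and a right adjoint preserves all limits that exist in its source.

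The main obstacle is the input to Step~3: building the hyperconnected--localic factorization and verifying its universal property is the genuinely nontrivial piece of topos theory, carried out in \cite[A4.6]{johnstone:elephant}. Once that and the hom-set bijection of Step~2 are in hand, nothing further needs to be computed: the functor is defined, full and faithful by construction, and limit-preserving as a right adjoint.
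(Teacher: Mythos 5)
Your proposal is correct; the paper's ``proof'' of this lemma is simply a pair of citations (Mac Lane--Moerdijk IX.5.2 for full-and-faithfulness, Johnstone C1.4.8 for limit preservation), and your sketch accurately reconstructs the content of those references: the construction of $\Sh$ on morphisms via left Kan extension followed by sheafification, the identification of $\Loc$ with the full subcategory of localic toposes, and the recognition of $\Sh$ as a right adjoint to the localic reflection (built from the hyperconnected--localic factorization) so that limit preservation follows for free. The one place where you go beyond what the paper records is in naming the left adjoint explicitly, which is exactly the mechanism underlying Johnstone's C1.4.8, so this is the same route rather than a different one.

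One small technical caveat worth flagging: the statement ``every topos factors as $\mathcal{E}\to\mathcal{E}_{\mathrm{loc}}\to\Cat{Sets}$'' tacitly restricts to Grothendieck toposes (those bounded over $\Cat{Sets}$), which is the ambient convention here since the paper only ever uses toposes of the form $\Sh(L)$. Also, as usual in this area, $\Cat{Topos}$ is being treated as a $1$-category whose morphisms are isomorphism classes of geometric morphisms; this is what makes ``full and faithful'' a statement about hom-\emph{sets}, and it is the level at which the ``right adjoints preserve limits'' argument applies. Neither of these affects the correctness of the argument.
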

\begin{proof}
  For the first statement, see~\cite[Proposition~IX.5.2]{maclanemoerdijk92}. For the second, \cite[C.1.4.8]{johnstone:elephant}.
\end{proof}

\section{The Kochen--Specker Theorem}\label{sec:obstruction}

The Kochen--Specker Theorem is a famous and important result from
the foundations of quantum mechanics. Its original intention was to
preclude the possibility of hidden variable theories, but there are
interpretational debates about whether this conclusion is valid. Its
mathematical content is important to us as an example of an obstruction, as
will be defined in the next section. It was originally stated in terms
of partial algebras, which also form a convenient starting point for us.

The idea behind partial algebras is to break an algebra into parts; each part itself is a (sub)algebra with particularly nice properties, but the cohesion between the parts is lost. This lets us, for example, think about a (noncommutative) ring in terms of its commutative parts. In general, of course, the partial algebra contains less information, precisely because the whole algebra does have cohesion between the parts. The Kochen--Specker theorem, and our results based on it, concern partial algebras; they do not analyse how much ``more cohesive'' an algebra is than the sum of its parts.

% \begin{definition}
  A \emph{partial Boolean algebra} consists of a set $B$ with:
  \begin{itemize}
    \item a reflexive and symmetric binary
      (\emph{commeasurability}) relation $\commeas \subseteq B \times B$;
    \item elements $0,1 \in B$;
    \item a (total) unary operation $\lnot \colon B \to B$;
    \item (partial) binary operations $\land, \lor \colon \commeas \to B$;
  \end{itemize}
  such that every set $S \subseteq B$ of pairwise commeasurable
  elements is contained in a set $T \subseteq B$, whose elements are
  also pairwise commeasurable, and on which the above operations
  determine a Boolean algebra structure. % \footnote{Note that this
                                % means that $T$ must contain 0 and 1
                                % and has to be closed under $\lnot,
                                % \land$ and
                                % $\lor$.\label{footnote:subalgebrastructural}}
  A morphism of partial Boolean algebras is a function that
  preserves commeasurability and all the algebraic structure, whenever
  defined. More precisely, we have:
  \begin{itemize}
\item $f(a) \commeas f(b)$ whenever $a \commeas b$;
\item $f(0) = 0$ and $f(1) =1$;
\item $f(a \lor b) = f(a) \lor f(b)$ and $f(a \land b) = f(a) \land f(b)$ whenever $a \commeas b$;
    \item $f(\lnot a) = \lnot f(a)$ for $a \in B$.
  \end{itemize}
% \end{definition}
%
Examples of partial Boolean algebras are ordinary Boolean algebras,
where the commeasurability relation is total (we will also call these
\emph{total Boolean algebras} for that reason), and projection
lattices of Hilbert spaces. In fact, the collection of projections
\[
  \Proj(A) = \{ p \in A \mid p^*p = p \}
\]
carries the structure of a partial Boolean algebra for every
C*-algebra $A$ (where we say that two projections are commeasurable
when they commute). The Kochen--Specker Theorem now reads as follows.

\begin{theorem}[Kochen--Specker Theorem] \label{kochenspeckerforbool}
  Let $f \colon \Proj(\MnC) \to B$ be a morphism of partial Boolean
  algebras for $n \geq 3$. If $B$
  is a (total) Boolean algebra, then it must be the terminal one
  (in which $0 = 1$).
\end{theorem}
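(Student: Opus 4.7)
My plan is to reduce the statement to the classical Kochen--Specker theorem, which asserts that for $n \geq 3$ no morphism of partial Boolean algebras $v \colon \Proj(\MnC) \to \{0,1\}$ with $v(0)=0$ and $v(1)=1$ exists. The bridge between the classical formulation and the present one is provided by the Boolean prime ideal theorem.

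First, I would suppose for contradiction that $B$ is nontrivial, i.e.\ $0 \neq 1$ in $B$. Then there is a Boolean algebra homomorphism $g \colon B \to \{0,1\}$ separating $0$ and $1$. The composite $g \circ f \colon \Proj(\MnC) \to \{0,1\}$ is a morphism of partial Boolean algebras with $(g \circ f)(1) = 1$ and $(g \circ f)(0) = 0$, directly contradicting the classical Kochen--Specker theorem; hence $B$ must be trivial.

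The deep ingredient here, taken as given from the cited literature, is the classical Kochen--Specker result itself, whose standard proof proceeds by exhibiting a finite set of rank-one projections (unit vectors in $\mathbb{C}^n$, localised in a three-dimensional subspace when $n > 3$) whose orthogonality pattern precludes any consistent $\{0,1\}$-valued valuation: each orthonormal basis within the configuration forces exactly one of its projections to be valued $1$, and the interlocking of the chosen bases rules out any coherent global assignment. The main obstacle in this proof plan is therefore not logical but combinatorial, residing entirely in that classical result; the surrounding argument is bookkeeping. As a side remark on constructiveness, the appeal to the prime ideal theorem can be sidestepped by replacing $B$ with the finite sub-Boolean algebra generated by the image of a fixed finite Kochen--Specker configuration, since a finite nontrivial Boolean algebra is constructively a product of copies of $\{0,1\}$ and admits a $\{0,1\}$-valued morphism by coordinate projection.
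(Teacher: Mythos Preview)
Your argument is correct. The paper itself does not give a proof of this theorem: it simply refers the reader to the original Kochen--Specker paper and to R\'edei's book. Your reduction from an arbitrary nontrivial Boolean algebra $B$ to the two-element Boolean algebra via a homomorphism $g \colon B \to \{0,1\}$ is the standard way of passing from the classical valuation form of Kochen--Specker to the formulation stated here, and it is presumably what those references make available. Your constructive aside---replacing the appeal to the prime ideal theorem by the finite subalgebra generated by the image of a fixed finite Kochen--Specker configuration---is apt in light of the paper's later remark (Remark~\ref{constructiveness}) that the arguments are intended to be mostly constructive; without it the proof as written would introduce a nonconstructive step the authors evidently wished to avoid.
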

\begin{proof}
  See~\cite{kochenspecker:hiddenvariables,redei:quantumlogic}.
\end{proof}

If $B$ is a partial Boolean algebra and we write $\cC(B)$ for the
diagram of its total subalgebras and inclusions between them, then we
can rephrase the previous theorem as follows (see
also~\cite{bell}).

\begin{corollary} \label{categorifiedkochenspecker}
  If  $n\geq 3$, then the colimit of $\cC(\Proj(\MnC))$
  in the category of Boolean algebras is the terminal Boolean
  algebra.
\end{corollary}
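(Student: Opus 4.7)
The plan is to compute the colimit via its universal property: for any Boolean algebra $A$, to give a Boolean algebra homomorphism $\colim \cC(\Proj(\MnC)) \to A$ is to give a cocone under the diagram $\cC(\Proj(\MnC))$ with apex $A$, that is, a family of Boolean algebra homomorphisms $f_C \colon C \to A$ indexed by the total subalgebras $C \subseteq \Proj(\MnC)$, compatible with the inclusions in $\cC(\Proj(\MnC))$.

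First, I would identify such cocones with morphisms of partial Boolean algebras $\Proj(\MnC) \to A$, where $A$ is viewed as a partial Boolean algebra in which the commeasurability relation is total. In the forward direction, a partial Boolean algebra morphism restricts on each total subalgebra to a Boolean algebra homomorphism, and these restrictions form a cocone. In the reverse direction, one assembles a cocone into a single function on $\Proj(\MnC)$ by noting that every element lies in some total subalgebra. Well-definedness uses that the intersection of two total Boolean subalgebras is again a total Boolean subalgebra, so belongs to the diagram; compatibility of the cocone with the induced inclusions then forces the two candidate values to agree. Preservation of the partial operations reduces to working inside a total subalgebra containing any given commeasurable pair, which exists by the defining axiom of a partial Boolean algebra.

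Second, I would invoke Theorem \ref{kochenspeckerforbool}: for $n \geq 3$, any partial Boolean algebra morphism $\Proj(\MnC) \to A$ into a total Boolean algebra forces $A$ to be the terminal Boolean algebra $\mathbf{1}$ in which $0 = 1$, and there is evidently a unique cocone with apex $\mathbf{1}$. The functor sending $A$ to the set of cocones on our diagram is therefore naturally isomorphic to $A \mapsto \mathrm{Hom}_{\cat{BoolAlg}}(\mathbf{1}, A)$, so by the Yoneda lemma the colimit is $\mathbf{1}$.

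The main obstacle is the bookkeeping in the first step: one must verify that an element lying in several total subalgebras receives the same image, and that the reassembled function genuinely preserves the commeasurability relation and the partial operations. Both checks are easy, but they rely on the closure of total subalgebras under finite intersection together with the axiom that every commeasurable set lies inside a common total subalgebra; once these are in hand, the corollary drops out of the Kochen--Specker theorem by abstract nonsense.
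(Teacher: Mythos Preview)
Your proposal is correct and follows essentially the same route as the paper: identify cocones under $\cC(\Proj(\MnC))$ in $\Cat{Boolean}$ with morphisms of partial Boolean algebras $\Proj(\MnC)\to A$, then apply Theorem~\ref{kochenspeckerforbool}. The only difference is packaging: the paper obtains this identification by citing the result from~\cite{vdbergheunen:colim} that $\Proj(\MnC)$ is the colimit of $\cC(\Proj(\MnC))$ in the category of partial Boolean algebras, whereas you verify the correspondence directly (well-definedness via intersections of total subalgebras, preservation of operations via the axiom that commeasurable pairs sit in a common total subalgebra).
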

\begin{proof}
Suppose we have a cocone from $\cC(\Proj(\MnC))$ to $B$ in the category of Boolean algebras. Clearly, it can also be considered as a cocone in the category of partial Boolean algebras. But because the colimit of $\cC(\Proj(\MnC))$ in the category of partial Boolean algebras exists and is precisely $\Proj(\MnC)$
  (see~\cite{vdbergheunen:colim}), it follows from
  Theorem~\ref{kochenspeckerforbool} that $B$ is trivial.
\end{proof}

We will also need a variation for C*-algebras. First, we define the
appropriate partial notion.
% \begin{definition}
% \label{def:pcstar}
  A \emph{partial C*-algebra} is a set $A$ with:
  \begin{itemize}
 \item a reflexive and symmetric binary
      (\emph{commeasurability}) relation $\commeas \subseteq A \times A$;
    \item elements $0,1 \in A$;
    \item (partial) binary operations $+, \cdot \colon  \commeas \to A$;
    \item a (total) involution $* \colon A \to A$;
    \item a (total) function $\cdot \colon \field{C} \times A \to A$;
    \item a (total) function $\norm{-}: A \to \field{R}$;
  \end{itemize}
  such that every set $S \subseteq A$ of pairwise commeasurable
  elements is contained in a set $T \subseteq A$, whose elements are
  also pairwise commeasurable, and on which the above operations
  determine the structure of a commutative C*-algebra. % \footnote{This
%   entails that $T$ contains 0 and 1, is closed under all
%   algebraic operations, and is norm-complete.}
% \end{definition}
%
% \begin{definition}
  A morphism of partial C*-algebras is a morphism $f \colon A
  \to B$ preserving the commeasurability relation and all the algebraic structure, whenever
  defined. More precisely, we have:
  \begin{itemize}
\item $f(0) = 0$ and $f(1) =1$;
\item $f(a) \commeas f(b)$, $f(a+ b) = f(a) + f(b)$ and $f(ab) = f(a) f(b)$ whenever $a \commeas b$;
    \item $f(a)^* = f(a^*)$ for $a \in A$;
    \item $f(za) = zf(a)$ for $z \in \field{C}$ and $a \in A$.
  \end{itemize}
  % Partial C*-algebras and partial *-morphisms organize themselves into
  % a category denoted by $\Cat{PCstar}$.
  % \end{definition}
%
Any commutative C*-algebra is an example of a partial C*-algebra, on
which the commeasurability relation is total. Moreover, for any
C*-algebra $A$, the normal elements
\[
  N(A) = \{ a \in A \mid aa^* = a^*a \}
\]
carry the structure of a partial C*-algebra (where commeasurability
means commutativity). Again, we write $\cC(A)$ for the diagram of
total subalgebras of a partial C*-algebra $A$ and inclusions between
them.

% \begin{corollary} \label{kochenspeckerforrings}
% Suppose $A = \MnC$ with $n \geq 3$. If $f:  N(A) \to B$ is a morphism of partial C*-algebras and $B$ is total, then $B$ has to be the terminal C*-algebra (i.e., the one-point C*-algebra in which, in particular, $0 = 1$).
% \end{corollary}
% \begin{proof}
% By restricting $f$ to the projections (i.e., those elements $p$ for which $p^2 = p = p^*$) we obtain a map to which we can apply Theorem \ref{kochenspeckerforbool}.
% \end{proof}

\begin{corollary} \label{kochenspeckerforcstar}
  If $n \geq 3$, then the colimit of $\cC(\MnC)$ in the category of commutative C*-algebras is the terminal C*-algebra (in which $0=1$).
\end{corollary}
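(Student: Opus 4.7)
The plan is to mirror closely the proof of Corollary~\ref{categorifiedkochenspecker}, replacing partial Boolean algebras with partial C*-algebras and reducing back to the projection case at the very end. Suppose we are given a cocone from $\cC(\MnC)$ to some commutative C*-algebra $A$. Regarding $A$ as a partial C*-algebra on which commeasurability is total, the cocone lives equally well in the category of partial C*-algebras. Invoking the companion result from~\cite{vdbergheunen:colim} that the colimit of $\cC(\MnC)$ in the category of partial C*-algebras exists and is $\MnC$ itself, the cocone factors uniquely through a partial C*-algebra morphism $f \colon \MnC \to A$.

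The next step is to extract a partial Boolean algebra morphism out of $f$. Since a partial C*-algebra morphism preserves the $*$-operation and products of commeasurable elements, and since projections are precisely the self-adjoint idempotents, $f$ restricts to a function $\Proj(\MnC) \to \Proj(A)$. One then checks routinely that this restriction preserves $0$, $1$, negation ($\lnot p = 1 - p$), and the meets and joins of commuting projections, all of which are expressible in the commutative C*-algebraic operations that $f$ already respects. Because $A$ is commutative, $\Proj(A)$ is a total Boolean algebra.

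At this point Theorem~\ref{kochenspeckerforbool} applies directly and forces $\Proj(A)$ to be the terminal Boolean algebra, so in particular $0 = 1$ in $A$. Hence every cocone from $\cC(\MnC)$ to a commutative C*-algebra factors through the terminal one, which is what we wanted.

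The only step that requires genuine input is the fact that $\MnC$ is the colimit of $\cC(\MnC)$ in partial C*-algebras, which is imported from~\cite{vdbergheunen:colim}; the remaining work is bookkeeping, the main point being the observation that a morphism of partial C*-algebras into a \emph{commutative} target automatically yields a morphism of partial Boolean algebras on projections into a \emph{total} one, which is exactly the hypothesis needed to trigger the Kochen--Specker obstruction.
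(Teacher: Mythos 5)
Your proof is correct and follows essentially the same route as the paper's: factor the given cocone through the colimit of $\cC(\MnC)$ in partial C*-algebras, restrict the resulting morphism to projections, observe that commutativity of $A$ makes $\Proj(A)$ a total Boolean algebra, and invoke Theorem~\ref{kochenspeckerforbool}. The one inaccuracy is that the colimit cited from~\cite{vdbergheunen:colim} is $N(\MnC)$, the partial C*-algebra of \emph{normal} elements, not $\MnC$ itself --- indeed $\MnC$ cannot carry a partial C*-algebra structure with commeasurability given by commutation, since reflexivity together with closure under $*$ in the total commutative subalgebras would force every element to be normal --- but this does not affect your argument, as the factorizing map $f \colon N(\MnC) \to A$ still restricts to the projections, all of which are normal.
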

\begin{proof}
Suppose we have a cocone from $\cC(\MnC)$ to $A$ in the category of commutative C*-algebras. Again, we consider this as a diagram in the category of partial C*-algebras, where the colimit of $\cC(\MnC)$ is precisely $N(\MnC)$  (see~\cite{vdbergheunen:colim}). So we obtain a map $f \colon N(\MnC) \to A$ of partial C*-algebras. By restricting $f$ to the projections we obtain a map $\Proj(f): \Proj(\MnC) \to \Proj(A)$ to which
  Theorem~\ref{kochenspeckerforbool} applies. Therefore $A$ must be the
  terminal C*-algebra.
\end{proof}

\section{Obstructions}\label{sec:abstract}

This section develops a completely general way to extend obstructions
like that of the previous section. We start with the general extension
theorem, and then formalize obstructions in suitable abstract terms.

\begin{proposition}\label{prop:abstract}
  Suppose given a commuting diagram of categories and functors
  \[\xymatrix{
    \cat{A} \ar^-{F}[r] \ar_-{H}[d] & \cat{B} \ar^-{K}[d] \\
    \cat{C} \ar_-{G}[r] & \cat{D}
  }\]
  where $\cat{B}$ is complete, and $K$ preserves limits.
  If
  \begin{itemize}
  \item $\mathcal{A}$ is a diagram in $\cat{A}$,
  \item there is a cone from $X$ to $H \mathcal{A}$ in $\cat{C}$,
  \item $Y=\lim F \mathcal{A}$,
  \end{itemize}
  then there exists a morphism $G(X) \to K(Y)$ in $\cat{D}$.
\end{proposition}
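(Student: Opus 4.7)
The plan is to chase the commuting square and then invoke the universal property of the limit in $\cat{D}$. Essentially nothing happens except bookkeeping: all the hypotheses are designed precisely to make the obvious diagram-chase work.

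First, I would take the given cone from $X$ to $H\mathcal{A}$ in $\cat{C}$ and apply the functor $G$ to it, obtaining a cone from $G(X)$ to $GH\mathcal{A}$ in $\cat{D}$. Since the square commutes, $GH = KF$, so this is the same as a cone from $G(X)$ to $KF\mathcal{A}$.

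Next, since $Y = \lim F\mathcal{A}$ (which exists because $\cat{B}$ is complete) and $K$ preserves limits by assumption, the object $K(Y)$ together with the images under $K$ of the limiting projections of $Y$ forms a limit cone of $KF\mathcal{A}$ in $\cat{D}$. The universal property of this limit then furnishes a unique mediating morphism $G(X) \to K(Y)$ in $\cat{D}$ factoring the cone from the previous paragraph through $K(Y)$, and this is the morphism we want.

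There is no real obstacle: the proof is entirely formal, and the proposition is best viewed as a convenient packaging of the universal property. The content of the result lies not in the proof but in how it will be applied in the sequel, namely with $H$ a fully faithful inclusion of commutative objects, $F$ a spectrum functor valued in some category of spatial structures, $K$ a limit-preserving comparison functor, and $\mathcal{A} = \cC(\MnC)$ the diagram of commutative subalgebras, so that an Kochen--Specker-style triviality result for $G(X)$ can be transferred to $K(Y)$.
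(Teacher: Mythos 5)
Your proof is correct and follows exactly the paper's argument: apply $G$ to the given cone, use $GH=KF$ from the commuting square, and invoke the universal property of $K(Y)=\lim KF\mathcal{A}$, which is a limit since $K$ preserves limits. No differences worth noting.
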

\begin{proof}
  Because $K$ preserves limits, $K(Y)=K(\lim F\mathcal{A})=\lim
  KF\mathcal{A}$. The square above commutes, therefore $K(Y)=\lim
  GH\mathcal{A}$. By assumption, there is a cone from $X$ to $H\mathcal{A}$ in $\cat{C}$. Hence, there is a cone from $GX$ to $GH\mathcal{A}$
 in $\cat{D}$. But we already saw that $K(Y)$ is the target of the
  universal such cone. Hence there exists a unique mediating morphism $G(X) \to K(Y)$.
\end{proof}

Notice that the assumptions of the previous proposition were stronger than
necessary: $\cat{B}$ need not be complete, we only really need
$\lim F\mathcal{A}$ to exist in $\cat{B}$. Here is an illustration
of the situation (that will turn out not to be obstructed).

\begin{example}
  This illustration works best with colimits instead of limits, so we
  will work in the opposite setting of the previous proposition.
  Let $\cat{A}$ be the category of finite sets and injective functions, included in the
  category $\cat{C}$ of all sets and injections. Take $\cat{D}$ to be
  the ordered class of cardinal numbers, regarded as a category, and let
  $\cat{B}$ be its subcategory of at most countable cardinals, and $K$
  the inclusion. Finally, set $F$ and $G$ to be the functors that
  take cardinality. Then $\cat{B}$ is cocomplete, and $K$ preserves colimits.

  Clearly, every set $X$ is the colimit in $\cat{C}$ of the directed
  diagram $\mathcal{A}$ in $\cat{A}$ of its finite subsets and inclusions amongst
  them. If $X$ is finite, then $Y=\colim F\mathcal{A}=\sup_{A \in \mathcal{A}} \card(A)
  =\card(X)$, giving a morphism $K(Y) \to G(X)$ in $\cat{D}$. If
  $X$ is infinite, then $Y=\sup_{A \in \mathcal{A}} \card(A)$ is at
  most countable, and therefore there still is a morphism $Y
  \leq \card(X)$ in $\cat{D}$.
  %
  % Let $\cat{A}$ be the category of finite sets and injective functions, included in the
  % category $\cat{C}$ of all sets and injections. Take $\cat{D}$ to be
  % the ordered class of cardinal numbers, regarded as a category, and let
  % $\cat{B}$ be its subcategory of at most countable cardinals, and $K$
  % the inclusion. Finally, set $F$ and $G$ to be the functors that
  % take cardinality.
  %
  % Clearly, every set $X$ is the colimit of the directed
  % diagram $\mathcal{A}$ of its finite subsets and inclusions amongst
  % them. If $X$ is finite, then $Y=\sup_{A \in \mathcal{A}} \card(A)
  % =\card(X)$, giving a morphism $K(Y) \to G(X)$ in $\cat{D}$. If
  % $X$ is infinite, then $Y=\sup_{A \in \mathcal{A}} \card(A)$ is at
  % most countable, and therefore there still is a morphism $Y
  % \leq \card(X)$ in $\cat{D}$.
\end{example}

We can think of the previous proposition as saying that the existence
of (universal) cones to diagrams in $\cat{A}$ can be transported along
the functors $F$ and $G$.
% We think of $\cat{A}$ and $\cat{B}$ as categories consisting of `easy'
% objects, that are included via $H$ and $K$ into categories $\cat{C}$
% and $\cat{D}$ that consist of `difficult objects'. The previous
% theorem then concerns `difficult' objects in $\cat{C}$
% that can be approximated by `easy' diagrams in $\cat{A}$.
% In these terms, the previous proposition says that such approximations can
% be transported along the functors $F$ and $G$.
Next, we turn to formalizing obstructions to such extensions in the
language of the previous proposition. (We are using obstruction here in the normal colloquial sense; no analogy with algebraic topology is intended.)

\begin{definition}\label{def:obstruction}
  In the situation of Proposition~\ref{prop:abstract}, an
  \emph{obstruction} to an object $X$ in $\cat{C}$ is a diagram
  $\mathcal{A}$ in $\cat{A}$ together with a cone from $X$ to $H\mathcal{A}$ in $\cat{C}$ such that $\lim F\mathcal{A}$
  is initial in $\cat{B}$. The object $X$ is called \emph{obstructed}
  if an obstruction to it exists.
\end{definition}

As a final abstract result, we now consider what happens when we try
to extend obstructed objects using Proposition~\ref{prop:abstract}.
An initial object is \emph{strict} when any
morphism into it is an isomorphism.

\begin{theorem}\label{thm:obstruction}
  In the situation of Proposition~\ref{prop:abstract}:
  if $K$ preserves initial objects,
  and initial objects in $\cat{D}$ are strict,
  then $G$ maps obstructed objects to initial objects.
\end{theorem}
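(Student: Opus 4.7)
The plan is to string together Proposition~\ref{prop:abstract} with the two hypotheses on initial objects. Let $X$ be an obstructed object in $\cat{C}$ and let $\mathcal{A}$ be a witnessing obstruction, so there is a cone from $X$ to $H\mathcal{A}$ in $\cat{C}$ and $Y = \lim F\mathcal{A}$ is initial in $\cat{B}$. I will first extract a morphism $G(X) \to K(Y)$ from Proposition~\ref{prop:abstract}, then show $K(Y)$ is initial, and finally use strictness to upgrade ``admits a map to an initial object'' to ``is itself initial''.

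More concretely, first I invoke Proposition~\ref{prop:abstract} directly, using the cone from $X$ to $H\mathcal{A}$ supplied by the obstruction, to produce a morphism $G(X) \to K(Y)$ in $\cat{D}$. Second, since $Y$ is initial in $\cat{B}$ and $K$ preserves initial objects by hypothesis, $K(Y)$ is initial in $\cat{D}$. (Preservation of initial objects is of course a special case of preservation of colimits, but it is stated as a separate hypothesis because in the applications $K$ will preserve limits rather than colimits.) Third, strictness of initial objects in $\cat{D}$ means that the morphism $G(X) \to K(Y)$ produced in the first step must be an isomorphism, whence $G(X)$ is itself initial in $\cat{D}$.

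There is no real obstacle here; the content of the theorem is almost entirely packaged into Proposition~\ref{prop:abstract} and into the definition of obstruction. The only thing worth flagging is the role of strictness: without it, Proposition~\ref{prop:abstract} only yields a map \emph{into} $K(Y)$, which on its own does not force $G(X)$ to be initial. Strictness is precisely what converts this one-sided information (existence of a map $G(X)\to K(Y)$ with $K(Y)$ initial) into the two-sided statement that $G(X)$ is initial. In applications, $\cat{D}$ will typically be a category of rings, locales, toposes, or similar, where the initial object is strict (for instance the zero ring, or the empty locale), so this hypothesis is mild and easy to verify case by case.
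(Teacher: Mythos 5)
Your proof is correct and follows essentially the same route as the paper: invoke Proposition~\ref{prop:abstract} to get a morphism $G(X)\to K(Y)$, use preservation of initial objects to see $K(Y)$ is initial, and use strictness to conclude $G(X)$ is initial. Your remarks on the role of strictness accurately capture why that hypothesis is needed.
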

\begin{proof}
  Let $X$ be an obstructed object in $\cat{C}$. Then there are a
  diagram $\mathcal{A}$ in $\cat{A}$ and a cone from $X$ to $H\mathcal{A}$
 in $\cat{C}$ such that $Y=\lim F\mathcal{A}$ is initial.
  Proposition~\ref{prop:abstract} now provides a morphism $G(X) \to K(Y)$ in $\cat{D}$.
  But since $K$ preserves initial objects, $K(Y)$ is initial in
  $\cat{D}$, and in fact strictly so. Hence the morphism
  $G(X) \to K(Y)$ must be an isomorphism, making $G(X)$ into a
  (strict) initial object.
\end{proof}

The previous theorem provides an intuition behind Definition~\ref{def:obstruction}:
whereas $X$ supports a cone to $H \mathcal{A}$,
this cone trivialises when transported along $G$.

\section{Gelfand spectrum}
\label{sec:gelfand}

This section is the first of several deriving no-go results. It shows
that there can be no nondegenerate functor extending Gelfand duality
that takes values in locales, topological spaces, toposes, or quantales.

For us, Gelfand duality is best considered as a duality between the category $\Cat{cCstar}$ of commutative C*-algebras and the category $\Cat{KRLoc}$ of compact regular locales. This duality exhibits every commutative C*-algebra $A$ as isomorphic to one of the form $\{ f: X \to \mathbb{C} \, : \, f \mbox{ continuous} \}$ for some compact regular locale $X$; the opens of the locale $X$ can be chosen to be the closed ideals of the commutative C*-algebra $A$, ordered by inclusion.

Combining the extension of Section~\ref{sec:abstract} with the
obstruction of Section~\ref{sec:obstruction}, we now immediately find
that there can be no nondegenerate functor from C*-algebras to locales
that extends the Gelfand spectrum.

\begin{corollary}\label{cor:nogo:gelfandloc}
  Any functor $G \colon \Cat{Cstar}\op \to \Loc$ that assigns to each
  commutative C*-algebra its Gelfand spectrum trivializes on $\MnC$ for $n \geq 3$.
\end{corollary}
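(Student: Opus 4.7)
The plan is to apply Theorem~\ref{thm:obstruction} to the commuting square
\[\xymatrix{
  \Cat{cCstar}\op \ar^-{F}[r] \ar@{^{(}->}[d] & \Cat{KRLoc} \ar@{^{(}->}[d] \\
  \Cat{Cstar}\op \ar_-{G}[r] & \Loc
}\]
in which $F$ is the Gelfand spectrum, the vertical arrows are the evident inclusions, and commutativity of the square is precisely the hypothesis that $G$ extends Gelfand.

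First I would verify the hypotheses on the right-hand vertical functor $K$. That $K$ preserves limits is exactly Lemma~\ref{lem:krlocclosedunderlimits}. The initial object of $\Loc$ is the degenerate locale with $0=1$, which is compact and regular (it is the Gelfand spectrum of the terminal commutative C*-algebra), so $K$ also preserves initial objects. Initial objects in $\Loc$ are strict: any frame morphism out of the one-element frame must send its unique element to an element that is simultaneously bottom and top of the codomain, collapsing it.

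Finally I would exhibit an obstruction to $\MnC$ in the sense of Definition~\ref{def:obstruction}. Take the diagram $\mathcal{A} = \cC(\MnC)$ of commutative C*-subalgebras and inclusions, viewed in $\Cat{cCstar}\op$; the subalgebra inclusions $C \hookrightarrow \MnC$ assemble into a cone from $\MnC$ to $H\mathcal{A}$ in $\Cat{Cstar}\op$. Since $F$ is half of the Gelfand equivalence it preserves limits, so $\lim F\mathcal{A}$ is the Gelfand spectrum of $\colim_{\Cat{cCstar}} \cC(\MnC)$; by Corollary~\ref{kochenspeckerforcstar} this colimit is the terminal C*-algebra, whose spectrum is the initial locale. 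Theorem~\ref{thm:obstruction} then forces $G(\MnC)$ to be initial in $\Loc$. The only point requiring care is the contravariance of Gelfand duality, which dictates that everything be arranged on the opposite-category side; apart from this bookkeeping, the proof is a direct plug-in of the abstract machinery of Section~\ref{sec:abstract} into the Kochen--Specker-type statement Corollary~\ref{kochenspeckerforcstar}.
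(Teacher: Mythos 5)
Your proposal is correct and follows essentially the same route as the paper: instantiate the abstract square with $\Spec \colon \Cat{cCstar}\op \to \Cat{KRLoc}$ and the inclusion $\Cat{KRLoc} \hookrightarrow \Loc$ (limit-preserving by Lemma~\ref{lem:krlocclosedunderlimits}), use that $\Spec$ is half of a duality so that $\lim \Spec\,\cC(\MnC)$ is the spectrum of the colimit killed by Corollary~\ref{kochenspeckerforcstar}, and conclude via Theorem~\ref{thm:obstruction} since the degenerate locale is a strict initial object preserved by the inclusion. Your extra verifications (initiality and strictness of the empty locale, preservation of initial objects by $K$) are exactly the points the paper dispatches in one sentence, so there is no substantive difference.
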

\begin{proof}
  We instantiate the setting of Proposition~\ref{prop:abstract} by
\[ \xymatrix{ \Cat{cCstar}\op \ar[r]^\Spec \ar@{ >->}[d] &\Cat{KRLoc} \ar@{ >->}[d]^K \\
\Cat{Cstar}\op \ar[r]_G & \cat{Loc}. }\]
 By Lemma~\ref{lem:krlocclosedunderlimits},
  $\cat{KRLoc}$ is complete and $K$ preserves limits. Considering $X = \MnC$ in $\Cat{CStar}$ and $\cC(\MnC)$ in $\Cat{cCStar}$, it follows from the fact that $\Spec$ is part of a duality, and hence preserves limits, in combination with Corollary \ref{kochenspeckerforcstar} that $X$ is obstructed when $n \geq 3$. Since the initial
  object in $\Cat{KRLoc}$ and $\Loc$ is the locale of opens of the empty topological
  space, which is a strict initial object in both categories, the statement follows from
  Theorem~\ref{thm:obstruction}.
\end{proof}

\begin{remark}
  In fact, any functor as in the previous corollary must trivialize on
  many more objects than just $\MnC$ for $n \geq 3$.  For example, one
  easily derives that any C*-algebra $A$ allowing a morphism $\MnC \to
  A$ for $n \geq 3$ is also obstructed.  These are precisely those
  C*-algebras of the form $\Mn{B}$ for $n \geq 3$ and any C*-algebra
  $B$~\cite[Corollary~17.7]{lam:modules}.  Therefore, more generally,
  direct sums $\bigoplus_i \Mn[n_i]{B_i}$ are also ruled out when $n_i
  \geq 3$ for each $i$.  Any von Neumann algebra without direct
  summands $\C$ or $\MnC[2]$ is obstructed,
  too~\cite{doering:kochenspecker}.
  This remark holds for all corollaries to follow.
\end{remark}

Because of the aforementioned equivalence between the categories of
compact Hausdorff spaces and compact regular locales, the
previous corollary holds equally well for topological spaces.

\begin{corollary}\label{cor:nogo:gelfandtop}
  Any functor $G \colon \Cat{Cstar}\op \to \Top$ that assigns to each
  commutative C*-algebra its Gelfand spectrum trivializes on $\MnC$
  for $n \geq 3$.
  \qed
\end{corollary}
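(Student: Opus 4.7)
The plan is to mimic the proof of Corollary~\ref{cor:nogo:gelfandloc} with $\Loc$ replaced by $\Top$. Concretely, I would instantiate Proposition~\ref{prop:abstract} by the square
\[ \xymatrix{ \Cat{cCstar}\op \ar[r]^\Spec \ar@{ >->}[d] & \Cat{KHaus} \ar@{ >->}[d]^K \\
\Cat{Cstar}\op \ar[r]_G & \Top, }\]
where the top arrow is Gelfand duality viewed as landing in the category $\Cat{KHaus}$ of compact Hausdorff spaces, and $K$ is the inclusion.

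To apply Proposition~\ref{prop:abstract} and Theorem~\ref{thm:obstruction} I need $\Cat{KHaus}$ to be complete and $K$ to preserve both limits and initial objects. Completeness together with preservation of limits follows from Tychonoff's theorem (products of compact Hausdorff spaces are compact Hausdorff in $\Top$) and from the fact that an equalizer of maps between Hausdorff spaces is a closed subspace of the domain, hence compact Hausdorff when the domain is; alternatively, one transports these properties across the equivalence $\Cat{KHaus} \simeq \Cat{KRLoc}$ given by Gelfand--Stone duality together with Lemma~\ref{lem:krlocclosedunderlimits}, noting that $K$ factors up to natural isomorphism through the points functor $\Loc \to \Top$, which is a right adjoint (as recalled in Section~\ref{sec:locales}) and therefore preserves limits. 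Preservation of the initial object is immediate, since the empty space is initial in both categories.

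The obstruction is imported from Corollary~\ref{kochenspeckerforcstar} exactly as in the proof of Corollary~\ref{cor:nogo:gelfandloc}: the colimit of $\cC(\MnC)$ in $\Cat{cCstar}$ is the zero C*-algebra when $n \geq 3$, whose Gelfand spectrum is the empty compact Hausdorff space, so $\MnC$ is an obstructed object in the sense of Definition~\ref{def:obstruction}. The initial object $\emptyset$ of $\Top$ is strict, because any continuous map into $\emptyset$ forces its domain to be empty, so Theorem~\ref{thm:obstruction} delivers $G(\MnC) = \emptyset$ for $n \geq 3$.

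I do not foresee any serious difficulty; the only mild subtlety is lining up the two standard formulations of Gelfand duality (into $\Cat{KRLoc}$ and into $\Cat{KHaus}$) so that the upper triangle really commutes up to natural isomorphism, but this is routine given the equivalence of the two target categories.
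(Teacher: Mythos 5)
Your proposal is correct and amounts to the paper's own (one-sentence) argument: the paper deduces the $\Top$ case from Corollary~\ref{cor:nogo:gelfandloc} via the equivalence $\Cat{KHaus} \simeq \Cat{KRLoc}$, which is precisely your second route through the points functor. Your primary route, verifying directly via Tychonoff and closed equalizers that $\Cat{KHaus}$ is complete and closed under limits in $\Top$ before applying Proposition~\ref{prop:abstract}, Corollary~\ref{kochenspeckerforcstar} and Theorem~\ref{thm:obstruction}, is just a self-contained substitute for that transfer and is equally valid.
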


Since $\MnC$ and all its sub-C*-algebras are von Neumann algebras, the previous two results also holds for von Neumann algebras:

\begin{corollary}\label{cor:nogo:gelfandlocvonneumann}
  Any functor $G \colon \Cat{Neumann}\op \to \Loc$ or $G \colon \Cat{Neumann}\op \to \Top$ that assigns to each
  commutative von Neumann algebra its Gelfand spectrum trivializes on
  $\MnC$ for $n \geq 3$.
  \qed
\end{corollary}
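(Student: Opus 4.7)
The plan is to reuse exactly the machinery of Corollary~\ref{cor:nogo:gelfandloc} and Corollary~\ref{cor:nogo:gelfandtop}, now instantiating Proposition~\ref{prop:abstract} and Theorem~\ref{thm:obstruction} with the commuting square
\[\xymatrix{
\Cat{cNeumann}\op \ar[r]^-{\Spec} \ar@{ >->}[d] & \Cat{KRLoc} \ar@{ >->}[d]^-{K} \\
\Cat{Neumann}\op \ar[r]_-{G} & \Loc
}\]
(and likewise with $\Loc$ replaced by $\Top$). The vertical functor $K$ still preserves limits and initial objects by Lemma~\ref{lem:krlocclosedunderlimits}, and the initial object in $\Loc$ and $\Top$ is strict, so the only work is to check that $X=\MnC$ is obstructed when viewed inside $\Cat{Neumann}\op$.

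The key observation is that every commutative sub-C*-algebra of $\MnC$ is finite-dimensional, hence already a von Neumann algebra; and every $*$-homomorphism between finite-dimensional C*-algebras is automatically normal. Consequently the diagram $\cC(\MnC)$ of commutative subalgebras and inclusions lives entirely in $\Cat{cNeumann}$, and a cocone from $\cC(\MnC)$ in $\Cat{cNeumann}$ to a commutative von Neumann algebra $A$ is the same datum as a cocone to $A$ in $\Cat{cCstar}$. By Corollary~\ref{kochenspeckerforcstar}, any such $A$ must be the trivial C*-algebra, which is also the terminal commutative von Neumann algebra. Hence the colimit of $\cC(\MnC)$ in $\Cat{cNeumann}$ is terminal, so dually the limit of $\Spec\,\cC(\MnC)$ in $\Cat{KRLoc}$ is the empty locale, which is initial there.

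This realises $\MnC$ as an obstructed object in $\Cat{Neumann}\op$ in the sense of Definition~\ref{def:obstruction}, so Theorem~\ref{thm:obstruction} applies and forces $G(\MnC)$ to be a (strict) initial object in $\Loc$, i.e.\ the empty locale. The $\Top$ version is identical, using the equivalence between $\Cat{KRLoc}$ and compact Hausdorff spaces to see that $K$ still preserves the relevant limits and the initial object (the empty topological space) is strict. The only potential subtlety — and the step I would verify most carefully — is the claim that morphisms of commutative von Neumann algebras restricted to the finite-dimensional subalgebras of $\MnC$ coincide with ordinary commutative C*-algebra morphisms, so that the Kochen–Specker-based computation of the colimit in Corollary~\ref{kochenspeckerforcstar} transfers verbatim.
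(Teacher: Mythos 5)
Your proposal is correct and takes essentially the same approach as the paper, which offers no separate proof beyond the observation that $\MnC$ and all of its sub-C*-algebras are (finite-dimensional, hence) von Neumann algebras, so that Corollaries~\ref{cor:nogo:gelfandloc} and~\ref{cor:nogo:gelfandtop} apply verbatim. Your extra step deducing initiality of $\lim \Spec\,\cC(\MnC)$ ``dually'' from the colimit in $\Cat{cNeumann}$ is unnecessary (and slightly delicate, since the duality for von Neumann algebras usually involves normal morphisms): that limit is taken in $\Cat{KRLoc}$ over literally the same diagram as in Corollary~\ref{cor:nogo:gelfandloc}, where it was already shown to be initial.
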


Because a locale is a reasonably elementary geometric notion, one
might hold out hope for nondegenerate functorial extensions valued in
categories of more involved geometric objects. However, we can use
Corollary~\ref{cor:nogo:gelfandloc} as a stepping stone to derive
no-go results for the more involved geometric notions of toposes and quantales.

\begin{corollary}\label{cor:nogo:gelfandtopos}
  Any functor $G \colon \Cat{Cstar}\op \to \Cat{Topos}$ that assigns
  to each commutative C*-algebra its Gelfand spectrum trivializes on
  $\MnC$ for $n \geq 3$.
\end{corollary}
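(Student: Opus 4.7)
The plan is to apply Theorem~\ref{thm:obstruction} to the commuting square
\[\xymatrix{
\Cat{cCstar}\op \ar[r]^-{\Spec} \ar@{^{(}->}[d] & \Cat{KRLoc} \ar[d]^-{K} \\
\Cat{Cstar}\op \ar[r]_-{G} & \Cat{Topos},
}\]
where $K$ is the composite $\Cat{KRLoc} \hookrightarrow \Loc \xrightarrow{\Sh} \Cat{Topos}$. The square commutes because the topos-valued Gelfand spectrum of a commutative C*-algebra $A$ must be the localic topos $\Sh(\Spec(A))$. The hypotheses of Proposition~\ref{prop:abstract} are then immediate: Lemma~\ref{lem:krlocclosedunderlimits} makes $\Cat{KRLoc}$ complete and shows the inclusion into $\Loc$ preserves limits, and Lemma~\ref{lem:Shpreserveslimits} gives limit preservation for $\Sh$, so $K$ preserves limits.

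To show that $\MnC$ is obstructed for $n \geq 3$, I would use the diagram $\cC(\MnC)$ of commutative C*-subalgebras. It supports a cocone in $\Cat{cCstar}$ with tip $\MnC$, equivalently a cone in $\Cat{cCstar}\op$ to $\MnC$. Since Gelfand duality makes $\Spec$ a contravariant equivalence, Corollary~\ref{kochenspeckerforcstar} translates directly into the statement that $\lim \Spec(\cC(\MnC))$ is the Gelfand spectrum of the trivial commutative C*-algebra, namely the empty locale, which is the initial object of $\Cat{KRLoc}$.

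For Theorem~\ref{thm:obstruction} it remains to verify that $K$ preserves initial objects and that the initial object of $\Cat{Topos}$ is strict. The former is clear: sheaves on the empty locale form the degenerate one-object topos $\mathbf{1}$, which is initial in $\Cat{Topos}$ because any topos $\mathcal{E}$ admits a unique geometric morphism out of $\mathbf{1}$ (its direct image picks out the terminal object of $\mathcal{E}$). For strictness, any geometric morphism $\mathcal{E} \to \mathbf{1}$ has an inverse image $f^{*} \colon \mathbf{1} \to \mathcal{E}$ which by left exactness must send the sole object to $1_{\mathcal{E}}$, and the adjunction $f^{*} \dashv f_{*}$ then forces $\mathrm{Hom}_{\mathcal{E}}(1_{\mathcal{E}}, X)$ to be a singleton for every $X$; taking $X = 0_{\mathcal{E}}$ collapses $0_{\mathcal{E}} \cong 1_{\mathcal{E}}$, so $\mathcal{E} \simeq \mathbf{1}$ and the morphism is an equivalence.

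I expect this strictness verification for $\mathbf{1}$ to be the only genuinely non-mechanical step; the commutativity of the square, the limit preservation of $K$, and the obstruction of $\MnC$ all follow by direct appeal to Section~\ref{sec:abstract} and the preceding lemmas. Theorem~\ref{thm:obstruction} then delivers that $G(\MnC)$ is initial, hence equivalent to $\mathbf{1}$, in $\Cat{Topos}$ for $n \geq 3$, which is the asserted trivialisation.
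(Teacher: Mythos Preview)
Your argument is correct and matches the paper's approach exactly: the paper simply observes that the composite $\Cat{KRLoc}\hookrightarrow\Loc\xrightarrow{\Sh}\Cat{Topos}$ preserves limits by Lemmas~\ref{lem:krlocclosedunderlimits} and~\ref{lem:Shpreserveslimits}, so the proof of Corollary~\ref{cor:nogo:gelfandloc} goes through with $\Cat{Topos}$ in the bottom right corner. Your only addition is the explicit verification that the initial topos is strict (which the paper leaves tacit), and one small slip of phrasing: the cocone with tip $\MnC$ lives in $\Cat{Cstar}$, not $\Cat{cCstar}$, since $\MnC$ is not commutative.
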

\begin{proof}
Since both the inclusion $\Cat{KRLoc} \to \Loc$ and $\Sh \colon \Cat{Loc} \to \Cat{Topos}$ preserve
  limits (see Lemmas \ref{lem:krlocclosedunderlimits} and \ref{lem:Shpreserveslimits}, respectively), their composition does as well.
  Therefore, the proof of Corollary~\ref{cor:nogo:gelfandloc} applies
  when we put $\Cat{Topos}$ in the bottom right corner.
\end{proof}

The previous corollary might not have come as a
surprise after Corollary~\ref{cor:nogo:gelfandloc}. After all, if
locales are `not noncommutative enough' to accommodate a good notion of
noncommutative Gelfand spectrum, then why would the `equally
not noncommutative' toposes do so? We will now consider quantales, which
were intended to be noncommutative versions of locales. In fact, quite
some effort has gone into studying them as candidates for Gelfand
spectra of noncommutative C*-algebras~\cite{mulvey,krumletal:max}.
The proof of the previous corollary shows that there is no
nondegenerate extension of the Gelfand spectrum with values in any
category of which compact regular locales are a
subcategory that is closed under limits. We can use the same idea in
the following.

A \emph{quantale} is a partially ordered set $Q$ that has least upper
bounds of arbitrary subsets, and is equipped with an element $e \in Q$
and an associative multiplication $Q \times Q \to Q$ satisfying
the following equations:
\[
  \bigvee (x y_i) = x (\bigvee y_i),
  \qquad
  \bigvee (y_i x) = (\bigvee y_i) x,
  \qquad
  ex = x = xe.
\]
A morphism $Q \to Q'$ of quantales is a function $f \colon Q' \to Q$
satisfying $f(e)=e'$, $f(\bigvee x_i) = \bigvee f(x_i)$, and $f(xy) = f(x)f(y)$.
Any locale is a quantale when we take meet as multiplication and the
top element as unit. Hence we can regard the Gelfand spectrum as a
functor $\Cat{cCstar}\op \to \Cat{Quantale}\op$.

\begin{lemma}\label{lem:krlocclosedunderlimitsinquantale}
  Compact regular locales are closed under limits
    in $\Cat{Quantale}\op$.
\end{lemma}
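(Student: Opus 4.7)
The plan is to reduce to Lemma~\ref{lem:krlocclosedunderlimits}. It suffices to establish that the canonical embedding $\Loc \hookrightarrow \Cat{Quantale}\op$ preserves limits, for then the limit in $\Cat{Quantale}\op$ of any diagram in $\Cat{KRLoc}$ agrees with its limit in $\Loc$, which Lemma~\ref{lem:krlocclosedunderlimits} places back in $\Cat{KRLoc}$.

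To obtain limit-preservation, I will exhibit $\Loc$ as reflective in $\Cat{Quantale}\op$. The reflector is the \emph{localic reflection}: to a quantale $Q$ one assigns the universal locale $L(Q)$ receiving a quantale morphism from $Q$, concretely the quotient of $Q$ by the smallest congruence forcing $xy = x \wedge y$ and the unit $e$ to coincide with the top element. One checks that $L(Q)$ carries a canonical locale structure and that every quantale morphism from $Q$ to a locale factors uniquely through the quotient map $Q \to L(Q)$. This is the required adjunction; since the inclusion is a right adjoint, it preserves all limits.

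Putting the pieces together, any limit in $\Cat{Quantale}\op$ of a diagram of compact regular locales is computed as the corresponding limit in $\Loc$, and hence lies in $\Cat{KRLoc}$ by Lemma~\ref{lem:krlocclosedunderlimits}.

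The main obstacle is the verification of the localic reflection: showing that the proposed quotient really does yield a locale, and that the claimed universal property holds with respect to arbitrary locale targets. The direction conventions for locale and quantale morphisms then need to be tracked carefully so that the inclusion genuinely ends up as a right adjoint (rather than a left adjoint, which would only preserve colimits in $\Cat{Quantale}\op$). Once those two points are settled, the remainder is formal bookkeeping.
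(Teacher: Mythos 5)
The paper's own proof is a one-line citation to~\cite[Corollary~4.4]{krumletal:max}, so your goal of supplying an explicit argument is reasonable. However, your proposed reduction fails at exactly the point you flag as needing verification, and it fails in a way that cannot be patched.

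The localic reflection you describe is a \emph{left} adjoint to the inclusion of frames into quantales, not a right adjoint. Concretely, for a quantale $Q$, the quotient $L(Q)$ you construct comes with a canonical quantale homomorphism $\eta_Q \colon Q \to L(Q)$, and the universal property is that every quantale homomorphism from $Q$ \emph{into} a frame factors uniquely through $\eta_Q$; this is the defining property of a reflection in the algebraic category $\cat{Frm} \hookrightarrow \cat{Quant}$ (where morphisms are sup-, unit-, and product-preserving maps). So $\cat{Frm}$ is reflective in $\cat{Quant}$, i.e.\ the inclusion is a \emph{right} adjoint there. Passing to opposites — which is how $\Loc$ embeds in $\Cat{Quantale}\op$ in this paper — the inclusion $\Loc \hookrightarrow \Cat{Quantale}\op$ becomes a \emph{left} adjoint, hence preserves colimits, not limits. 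Your plan would therefore establish the wrong preservation property.

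Moreover, the stronger intermediate claim implicit in your reduction — that all of $\Loc$ is closed under limits in $\Cat{Quantale}\op$ — is simply false, so no bookkeeping of directions can rescue the argument. A limit in $\Cat{Quantale}\op$ of a diagram of locales is a colimit of frames in $\cat{Quant}$; already the binary coproduct of two frames in $\cat{Quant}$ (a free quantale product) does not force the multiplication to be commutative or idempotent, and so is generally not a frame, whereas the coproduct in $\cat{Frm}$ is the frame tensor product. This shows the hypotheses of compactness and regularity in the lemma are essential, not incidental: the result genuinely needs properties special to compact regular locales that do not hold for locales at large, and these nowhere enter your proposed argument. The cited result of Kruml, Pelletier, Resende, and Rosický supplies precisely this missing content.
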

\begin{proof}
  See~\cite[Corollary~4.4]{krumletal:max}.
\end{proof}

\begin{corollary}
  Any functor $G \colon \Cat{Cstar}\op \to \Cat{Quantale}\op$ that
  assigns to each commutative C*-algebra its Gelfand spectrum
  trivializes on $\MnC$ for $n \geq 3$.
\end{corollary}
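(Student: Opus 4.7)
The plan is to rerun the argument of Corollary~\ref{cor:nogo:gelfandloc} with $\Cat{Quantale}\op$ in place of $\Loc$ in the bottom-right corner. Concretely, I would instantiate Proposition~\ref{prop:abstract} with the square
\[\xymatrix{
  \Cat{cCstar}\op \ar[r]^-{\Spec} \ar@{^{(}->}[d] & \Cat{KRLoc} \ar[d]^-{K} \\
  \Cat{Cstar}\op \ar[r]_-{G} & \Cat{Quantale}\op,
}\]
where $K$ sends a compact regular locale to itself viewed as a quantale under meet multiplication with the top element as unit, regarded as a morphism in $\Cat{Quantale}\op$. Commutativity of the square is just the hypothesis that $G$ extends the Gelfand spectrum.

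The category $\Cat{KRLoc}$ is complete by Lemma~\ref{lem:krlocclosedunderlimits}, and $K$ preserves limits by Lemma~\ref{lem:krlocclosedunderlimitsinquantale}. Taking $X = \MnC$ and $\mathcal{A} = \cC(\MnC)$, Gelfand duality turns the colimit computation of Corollary~\ref{kochenspeckerforcstar} into a limit in $\Cat{KRLoc}$, namely $\Spec$ of the terminal commutative C*-algebra, which is the empty locale, i.e.\ the initial object of $\Cat{KRLoc}$. Hence $\MnC$ is obstructed in the sense of Definition~\ref{def:obstruction} for $n \geq 3$, and Theorem~\ref{thm:obstruction} yields the desired conclusion provided we check that $K$ preserves initial objects and that initial objects in $\Cat{Quantale}\op$ are strict.

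The only genuinely new step, and the main obstacle, is this strictness verification. The initial object of $\Cat{Quantale}\op$ is the terminal one-point quantale in $\Cat{Quantale}$, which is visibly the image under $K$ of the empty locale, so $K$ preserves initial objects. For strictness, one must show that any quantale homomorphism $f \colon 1 \to Q$ out of the one-point quantale forces $Q$ to be trivial: preservation of the unit and of arbitrary joins gives $e_Q = f(e) = f(0) = f(\bigvee \emptyset) = 0_Q$, and then for every $x \in Q$ we compute $x = e_Q \cdot x = 0_Q \cdot x = (\bigvee \emptyset) \cdot x = \bigvee \emptyset = 0_Q$ using distributivity of multiplication over arbitrary joins. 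Thus $Q$ is a singleton and $f$ is an isomorphism, so initial objects in $\Cat{Quantale}\op$ are strict and the hypotheses of Theorem~\ref{thm:obstruction} are met.
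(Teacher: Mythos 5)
Your proof is correct and follows essentially the same route as the paper: the paper's proof simply says to rerun Corollary~\ref{cor:nogo:gelfandloc} with Lemma~\ref{lem:krlocclosedunderlimitsinquantale} in place of Lemma~\ref{lem:krlocclosedunderlimits}, which is exactly your square. Your explicit verification that the one-point quantale is the initial object of $\Cat{Quantale}\op$, is preserved by $K$, and is strict (via preservation of the unit and the empty join) fills in details the paper leaves implicit, and it is carried out correctly.
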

\begin{proof}
  Using Lemma~\ref{lem:krlocclosedunderlimitsinquantale} instead of
  Lemma~\ref{lem:krlocclosedunderlimits}, the proof of
  Corollary~\ref{cor:nogo:gelfandloc} establishes the statement.
\end{proof}

At first sight the previous corollary might seem to contradict results
of~\cite{krumletal:max}: one can reconstruct the original C*-algebra
from its maximal spectrum, and the assignment which sends a C*-algebra
to its maximal spectrum is functorial. However, this functor does not
send a commutative C*-algebra to its Gelfand spectrum, but
to something from which it may be reconstructed (its so-called
spatialization). Therefore the maximal spectrum does not satisfy our
specification square of Proposition~\ref{prop:abstract}.

\section{Zariski spectrum}
\label{sec:zariski}

In this section we turn to the Zariski spectrum. This construction underlies
algebraic geometry by connecting commutative rings to coherent spaces via
the prime ideals of the ring~\cite{eisenbud,johnstone:stonespaces}; more precisely, the Zariski spectrum of a commutative ring $A$ is the locale whose opens are the radical ideals of $A$. Before we go on to extending obstructions to noncommutative
generalizations of this duality, we first consider the basic no-go
result. The abstract machinery from Sections~\ref{sec:obstruction}
and~\ref{sec:abstract} does not apply directly, because the Zariski
spectrum functor $\Cat{cRing}\op \to \Loc$ famously does not preserve
(products and hence) limits. Fortunately, it suffices to restrict to
finite-dimensional complex algebras, where the Zariski spectrum
functor does preserve limits, and where our obstructed objects $\MnC$
for $n \geq 3$ live.

\begin{corollary}\label{cor:nogo:zariskiloc}
  Any functor $G \colon \Cat{Ring}\op \to \Loc$ that assigns to
  each commutative ring its Zariski spectrum trivializes on $\MnC$ for
  $n \geq 3$.
\end{corollary}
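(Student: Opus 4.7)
The plan is to mirror Corollary~\ref{cor:nogo:gelfandloc}, but working inside the subcategory $\Cat{fdcCAlg}$ of finite-dimensional commutative $\mathbb{C}$-algebras, where the Zariski spectrum is as well-behaved as the Gelfand spectrum. Concretely, I instantiate Proposition~\ref{prop:abstract} by
\[ \xymatrix{ \Cat{fdcCAlg}\op \ar[r]^-{\Spec} \ar@{ >->}[d] & \Cat{KRLoc} \ar@{ >->}[d]^-{K} \\ \Cat{Ring}\op \ar[r]_-{G} & \Loc. } \]
Every commutative C*-subalgebra of $\MnC$ is finite-dimensional, hence of the form $\mathbb{C}^k$ with $k\le n$, so the diagram $\cC(\MnC)$ lives in $\Cat{fdcCAlg}\op$ and inherits the evident cone from $X=\MnC$ in $\Cat{Ring}\op$. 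Any object $A$ of $\Cat{fdcCAlg}$ is Artinian with only finitely many prime ideals, all maximal, so $\Spec(A)$ is a finite discrete and hence compact regular locale; this shows that $\Spec$ factors through $\Cat{KRLoc}$ and that the square commutes, since $G$ agrees with the Zariski spectrum on commutative rings by hypothesis.

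Completeness of $\Cat{KRLoc}$ and limit-preservation of $K$ are the content of Lemma~\ref{lem:krlocclosedunderlimits}. The empty locale is initial in both $\Cat{KRLoc}$ and $\Loc$, is preserved by $K$, and is strict in $\Loc$, because any frame morphism out of the one-element frame identifies $0$ and $1$ in its target. It therefore remains to show that $\MnC$ is obstructed, i.e.\ that $\lim \Spec \cC(\MnC)$ is initial in $\Cat{KRLoc}$. For each $\mathbb{C}^k$ in the diagram the Zariski spectrum coincides with the Gelfand spectrum, and the restriction maps induced by the subalgebra inclusions agree as well. Hence, via the Gelfand equivalence $\Cat{KRLoc}\simeq\Cat{cCstar}\op$, this limit corresponds to the colimit of $\cC(\MnC)$ in $\Cat{cCstar}$, which is the terminal C*-algebra by Corollary~\ref{kochenspeckerforcstar}; its Gelfand spectrum is the empty locale. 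Theorem~\ref{thm:obstruction} then delivers that $G(\MnC)$ is initial in $\Loc$.

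The main obstacle I anticipate is purely the bookkeeping in the penultimate step: one must check carefully that the diagram of Zariski spectra of the $\mathbb{C}^k\in\cC(\MnC)$ really is the Gelfand-dual of the C*-algebraic diagram appearing in Corollary~\ref{kochenspeckerforcstar}, so that the Gelfand-side Kochen--Specker corollary can be invoked directly rather than reproving a ring-theoretic analogue.
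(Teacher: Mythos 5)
Your proof is correct and follows essentially the same route as the paper: restrict to finite-dimensional commutative complex algebras, where the Zariski spectrum is the finite discrete locale of characters, and then rerun the Gelfand-case argument via Proposition~\ref{prop:abstract}, Theorem~\ref{thm:obstruction} and Corollary~\ref{kochenspeckerforcstar}. The only cosmetic difference is that the paper records the limit behaviour of the restricted Zariski functor through its representability by $\mathbb{C}$ and closure of discrete locales under limits, whereas you identify it with the Gelfand spectrum and reuse $\Cat{KRLoc}$ together with Gelfand duality; both hinge on the same prime-ideal/character identification, which is exactly the bookkeeping you flag.
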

\begin{proof}
  When a commutative algebra $A$ over $\mathbb{C}$ is
  finite-dimensional, it is Artinian as a ring, and therefore any
  prime ideal is maximal~\cite[Theorem~2.14]{eisenbud}. In particular,
  every point in $\Spec(A)$ is closed. In turn, maximal ideals correspond
  bijectively, and functorially, to algebra homomorphisms: a character
  $f \colon A \to \mathbb{C}$ corresponds to its kernel
  $f^{-1}(0)$. Thus, when restricted to finite-dimensional commutative
  complex algebras, the Zariski spectrum functor is naturally
  isomorphic to a representable functor: $\Spec \cong
   \cat{cRing}(-,\mathbb{C}) \colon \cat{fcAlg}_{\mathbb{C}}\op \to
  \Cat{Set}$. Moreover, in this case there are only
  finitely many maximal ideals~\cite[Theorem~2.14]{eisenbud}, so
  $\Spec(A)$ must be discrete. Clearly
  discrete locales are closed under limits in $\Loc$ (see also Lemma~\ref{lem:coherentlimits}), so this
  restricted functor preserves finite limits, and just as in
  Corollary~\ref{cor:nogo:gelfandloc}, we see that any
  functor $\cat{fAlg}_{\mathbb{C}}\op \to \Cat{Loc}$ that assigns to each
  commutative algebra its Zariski spectrum must trivialize on $\MnC$
  for $n \geq 3$. Precomposing with the inclusion  $\cat{fAlg}_{\mathbb{C}} \hookrightarrow \cat{Ring}$ finishes the proof.
\end{proof}

Reyes' result~\cite{reyes:onextensions} now follows directly from the
previous, constructive, corollary.

This basic no-go result can be extended to values in categories of
which coherent locales are a subcategory that is
closed under limits, as in Section~\ref{sec:gelfand}. For
example, we get the following corollary.

\begin{corollary}\label{cor:nogo:zariskitopos}
  Any functor $G \colon \Cat{Ring}\op \to \Cat{Topos}$ that assigns to
  each commutative ring its Zariski spectrum trivializes on $\MnC$ for
  $n \geq 3$.
  \qed
\end{corollary}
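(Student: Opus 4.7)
The plan is to transplant the argument of Corollary \ref{cor:nogo:zariskiloc} into $\Topos$, in exactly the way that Corollary \ref{cor:nogo:gelfandtopos} transplanted Corollary \ref{cor:nogo:gelfandloc}. As in Corollary \ref{cor:nogo:zariskiloc}, I first restrict to the full subcategory $\cat{fcAlg}_{\mathbb{C}}$ of finite-dimensional commutative complex algebras, where the Zariski spectrum factors through the subcategory of finite discrete locales and there preserves finite limits, and then I precompose $G$ with the inclusion $\cat{fcAlg}_{\mathbb{C}} \hookrightarrow \Cat{Ring}$.

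The key verification is that the right-hand vertical arrow of the specification square still preserves limits when $\Topos$ replaces $\Loc$. Finite discrete locales are coherent, so by Lemma \ref{lem:coherentlimits} their inclusion into $\Loc$ preserves limits; composing with the sheaf functor $\Sh \colon \Loc \to \Topos$, which preserves limits by Lemma \ref{lem:Shpreserveslimits}, yields a limit-preserving functor into $\Topos$. Instantiating Proposition \ref{prop:abstract} with this composite as the right-hand arrow, $\MnC$ for $n \geq 3$ is obstructed by the very cone from $\cC(\MnC)$ used in the proof of Corollary \ref{cor:nogo:zariskiloc}.

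To conclude via Theorem \ref{thm:obstruction}, I will check that $\Sh$ sends the initial discrete locale to an initial topos and that initial toposes are strict. The initial discrete locale is the empty one (the Zariski spectrum of the zero ring), and $\Sh$ sends it to the trivial one-object topos, which is initial in $\Topos$. Strictness holds because a geometric morphism into the trivial topos has inverse image forced to send the unique object both to a terminal (by left exactness) and to an initial (by the adjunction), so the source topos has terminal equal to initial and is itself trivial. Both observations are already implicit in the proof of Corollary \ref{cor:nogo:gelfandtopos}, so no new ideas are required; the only small obstacle is bookkeeping the two-stage reduction, first to finite-dimensional complex algebras and then through sheaves.
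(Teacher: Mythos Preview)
Your proposal is correct and matches the paper's intended argument. The paper states this corollary with a bare \qed, prefaced only by the remark that the basic no-go result extends to any target category in which the relevant spectral locales sit as a limit-closed subcategory, ``as in Section~\ref{sec:gelfand}''; you have simply spelled out that extension explicitly, composing the limit-closure used in Corollary~\ref{cor:nogo:zariskiloc} with the limit-preservation of $\Sh$ from Lemma~\ref{lem:Shpreserveslimits} and checking strictness of the initial topos.
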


In Section~\ref{sec:gelfand} we used closure under limits to extend
the basic no-go result. Another way is by postcomposing with functors
that reflect initial objects, as in the rest of this section. Incidentally,
these limitations also apply to functorial extensions of Gelfand
duality discussed in Section~\ref{sec:gelfand}.

Another generalized notion of space is that of a \emph{ringed
topological space} or \emph{ringed locale} \cite{hartshorne:book}. These are topological
spaces/locales together with a sheaf of commutative rings over
them, and are important in algebraic geometry. Every topological
space/locale $X$ can be regarded as a ringed space by letting the
structure sheaf be the sheaf of continuous functions on opens of $X$.
One can also consider the notion of a \emph{ringed topos}: a topos
together with a commutative ring object in it. This notion generalizes
those of ringed topological spaces and ringed locales, because the
category of sheaves over a ringed space is a ringed topos almost by
definition. The import lies in the fact that every commutative ring is
isomorphic to the ring of global sections of a sheaf of local rings.
Thus we can regard the Zariski spectrum as a functor
$\Cat{cRing}\op \to \Cat{RingedTop}$, $\Cat{cRing}\op \to
\Cat{RingedLoc}$, or $\Cat{cRing}\op \to \Cat{RingedTopos}$.

\begin{corollary}\label{cor:nogo:zariskiringedtopos}
  Any functor $G \colon \Cat{Ring}\op \to \Cat{RingedTopos}$ that
  assigns to each commutative ring its Zariski spectrum
  trivializes on $\MnC$ for $n \geq 3$. The same holds when we replace
  $\Cat{RingedTopos}$ by $\Cat{RingedTop}$ or $\Cat{RingedLoc}$.
\end{corollary}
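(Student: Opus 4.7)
The plan is to reduce each of the three claims to the corresponding unringed no-go result by postcomposing with the forgetful functor $U$ that sends a ringed topos (resp.\ ringed locale, ringed topological space) to its underlying topos (resp.\ locale, topological space). If $G \colon \Cat{Ring}\op \to \Cat{RingedTopos}$ assigns to each commutative ring its ringed Zariski spectrum, then the composite $U \circ G$ sends each commutative ring $A$ to the topos $\Sh(\Spec A)$, which is precisely the Zariski spectrum in the sense of Corollary~\ref{cor:nogo:zariskitopos}. That corollary therefore yields that $U(G(\MnC))$ is the initial topos for $n \geq 3$. The $\Cat{RingedLoc}$ and $\Cat{RingedTop}$ versions reduce in the same way to Corollary~\ref{cor:nogo:zariskiloc} and to its topological analogue, whose proof is identical: since the Zariski spectra of finite-dimensional commutative complex algebras are discrete, they may be treated indifferently in $\Loc$ or in $\Top$.

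It then remains to show that the forgetful functor $U$ reflects initial objects. The initial topos is the degenerate one in which the terminal and initial objects coincide, so any ring object in it is, uniquely up to isomorphism, the zero ring. Hence any ringed topos whose underlying topos is initial is itself isomorphic to the pair consisting of the initial topos and the zero ring. This pair is in turn initial in $\Cat{RingedTopos}$: out of the initial topos there is a unique geometric morphism into any target topos, and it extends uniquely to a morphism of ringed toposes, because every ring admits a unique morphism into the zero ring. The same observation applies verbatim in $\Cat{RingedLoc}$ and $\Cat{RingedTop}$, whose initial objects are the empty locale (resp.\ the empty topological space) equipped with the zero ring as structure sheaf.

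The only mildly delicate step is the reflection of initial objects, which requires unpacking the definition of a morphism in the ringed category; once one observes that a morphism whose target has a zero structure ring carries no ring-theoretic content, the reduction to the already-proved corollaries is immediate, and no additional input from the Kochen--Specker obstruction is needed beyond what has already been used to establish Corollaries~\ref{cor:nogo:zariskiloc} and~\ref{cor:nogo:zariskitopos}.
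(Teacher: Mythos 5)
Your proposal is correct and follows essentially the same route as the paper: postcompose $G$ with the forgetful functor to $\Cat{Topos}$ (resp.\ $\Loc$, $\Top$), apply Corollary~\ref{cor:nogo:zariskitopos} (resp.\ Corollary~\ref{cor:nogo:zariskiloc} and its topological analogue), and use that the forgetful functor reflects initial objects. The only difference is that you spell out the reflection-of-initial-objects step (initial base forces the zero structure ring, and that pair is initial), which the paper simply asserts; your justification of it is sound.
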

\begin{proof}
  The forgetful functor $U \colon \Cat{RingedTopos} \to \Cat{Topos}$
  reflects initial objects. Since $UG$ is a functor satisfying the
  hypotheses of Corollary~\ref{cor:nogo:zariskitopos}, $UG(\MnC)$ is
  initial when $n \geq 3$. But that means that $G(\MnC)$ is initial.
\end{proof}

Actually, the main notion of interest in algebraic geometry is that of
a scheme (see \cite{hartshorne:book}). A locally ringed space is a ringed space where each
stalk of the structure sheaf is not just a ring but a local ring. An
\emph{affine scheme} is a locally ringed space isomorphic to the
Zariski spectrum of some commutative ring. A \emph{scheme} is a
locally ringed space admitting an open cover, such that the
restriction of the structure sheaf to each covering open is an affine
scheme.

\begin{corollary}\label{cor:nogo:zariskischeme}
  Any functor $G \colon \Cat{Ring}\op \to \Cat{Scheme}$ that assigns
  to each commutative ring its Zariski spectrum trivializes on
  $\MnC$ for $n \geq 3$.
\end{corollary}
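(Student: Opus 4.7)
The plan is to reduce Corollary~\ref{cor:nogo:zariskischeme} to Corollary~\ref{cor:nogo:zariskiringedtopos}, using exactly the same pattern by which the latter was reduced to Corollary~\ref{cor:nogo:zariskitopos}. First I would note that there is an evident inclusion (forgetful) functor $U \colon \Cat{Scheme} \to \Cat{RingedTop}$: a scheme is by definition a particular kind of locally ringed space, hence in particular a ringed topological space. Composing, $UG \colon \Cat{Ring}\op \to \Cat{RingedTop}$ is a functor that assigns to each commutative ring its Zariski spectrum, because the Zariski spectrum of a commutative ring is constructed in the first place as a ringed space (indeed an affine scheme). Corollary~\ref{cor:nogo:zariskiringedtopos} therefore applies to $UG$, so $UG(\MnC)$ is initial in $\Cat{RingedTop}$ whenever $n \geq 3$.

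The second step is to observe that $U$ reflects initial objects. The initial object of $\Cat{RingedTop}$ is the empty topological space equipped with its (unique) structure sheaf, and this coincides with the empty scheme, which is the initial object of $\Cat{Scheme}$. Therefore if $U(X)$ is initial in $\Cat{RingedTop}$, then $X$ must itself be the empty scheme, hence initial in $\Cat{Scheme}$. Applying this with $X = G(\MnC)$ yields that $G(\MnC)$ is initial in $\Cat{Scheme}$ for $n \geq 3$, which is exactly the trivialization asserted by the corollary.

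The only real step to check is the assertion that the initial object of $\Cat{RingedTop}$ actually lies in the image of $U$ and agrees with the initial scheme; this is the mild obstacle. Once granted, the argument is a one-line postcomposition, and no new use of the Kochen--Specker machinery or of Proposition~\ref{prop:abstract} is needed, since all the work has already been done in Corollary~\ref{cor:nogo:zariskiringedtopos}.
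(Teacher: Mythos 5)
Your proof is correct and follows essentially the same route as the paper: reduce to the earlier no-go result by postcomposing with a forgetful functor that reflects initial objects (via the empty space/empty scheme). The only, harmless, difference is the choice of forgetful functor: the paper forgets the structure sheaf entirely and lands in $\Top$, whereas you forget only the locality condition and land in $\Cat{RingedTop}$, invoking the $\Cat{RingedTop}$ clause of Corollary~\ref{cor:nogo:zariskiringedtopos}; your check that this functor reflects initial objects is exactly the point that needs verifying, and it holds as you say.
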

\begin{proof}
  The forgetful functor from the category of schemes to $\Top$
  reflects initial objects, so the proof of the previous corollary applies.
\end{proof}

\section{Stone and Pierce spectra}\label{sec:stone}

In this section we will have a further look at some dualities related to the Stone spectrum, where the Kochen--Specker Theorem also provides an obstruction to further extending them to suitably  noncommutative  structures.

First we consider the Stone spectrum, that provides a duality between Boolean algebras and Stone locales: given a Boolean algebra, the associated Stone locale has as opens the ultrafilters on $B$; and given a Stone locale $L$, the original Boolean algebra can be reconstructed by taking the complemented elements in $L$.

\begin{corollary}\label{cor:partialbool}
  Any functor $F \colon \Cat{PBoolean}\op \to \Loc$ that assigns to
  each Boolean algebra its Stone spectrum trivializes on
  $\mathrm{Proj}(\MnC)$ for $n \geq 3$.
  \qed
\end{corollary}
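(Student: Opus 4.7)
The plan is to imitate the proof of Corollary~\ref{cor:nogo:gelfandloc}, replacing Gelfand duality with Stone duality and Corollary~\ref{kochenspeckerforcstar} with Corollary~\ref{categorifiedkochenspecker}. Concretely, I would instantiate Proposition~\ref{prop:abstract} by the square
\[\xymatrix{
\Cat{Boolean}\op \ar[r]^-{\Spec} \ar@{ >->}[d] & \Cat{StoneLoc} \ar@{ >->}[d]^K \\
\Cat{PBoolean}\op \ar[r]_-{F} & \Loc,
}\]
where the left-hand inclusion sends a total Boolean algebra to itself viewed as a partial Boolean algebra, and $K$ is the inclusion of Stone locales into all locales.

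First I would verify the hypotheses of Theorem~\ref{thm:obstruction}. By Lemma~\ref{lem:stonelimits}, $\Cat{StoneLoc}$ is closed under limits in $\Loc$, so $\Cat{StoneLoc}$ is complete and $K$ preserves limits. The initial object of $\Loc$ is the locale of opens of the empty space, which is a Stone locale and is strictly initial in both $\Cat{StoneLoc}$ and $\Loc$; in particular $K$ preserves the initial object.

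Next I would exhibit the obstruction at $X = \Proj(\MnC)$ for $n \geq 3$. Take the diagram $\mathcal{A} = \cC(\Proj(\MnC))$ of total Boolean subalgebras of $\Proj(\MnC)$ with inclusions between them, viewed in $\Cat{Boolean}\op$; the inclusions into $\Proj(\MnC)$ assemble into a cone from $X$ to $H\mathcal{A}$ in $\Cat{PBoolean}\op$. Since Stone duality is a (dual) equivalence $\Cat{Boolean}\op \simeq \Cat{StoneLoc}$, the functor $\Spec$ turns colimits in $\Cat{Boolean}$ into limits in $\Cat{StoneLoc}$; Corollary~\ref{categorifiedkochenspecker} identifies the colimit of $\cC(\Proj(\MnC))$ in $\Cat{Boolean}$ with the terminal Boolean algebra, so $\lim \Spec(\mathcal{A})$ is the Stone spectrum of the trivial Boolean algebra, namely the empty locale, which is initial in $\Cat{StoneLoc}$. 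Hence $\Proj(\MnC)$ is obstructed in the sense of Definition~\ref{def:obstruction}, and Theorem~\ref{thm:obstruction} then forces $F(\Proj(\MnC))$ to be the (strict) initial object of $\Loc$.

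There is no real obstacle here; the only delicate point is bookkeeping the direction of arrows, namely that the colimit description of $\Proj(\MnC)$ in $\Cat{PBoolean}$ from \cite{vdbergheunen:colim} (used implicitly in Corollary~\ref{categorifiedkochenspecker}) becomes a limit cone in $\Cat{PBoolean}\op$, so that it really does play the role of the cone from $X$ to $H\mathcal{A}$ demanded by Definition~\ref{def:obstruction}.
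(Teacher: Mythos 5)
Your proposal is correct and follows essentially the same route as the paper: the paper's own proof instantiates the same square (with $\Cat{Boolean}\op$, the category of Stone locales, $\Cat{PBoolean}\op$ and $\Loc$), takes the same diagram of total Boolean subalgebras of $\Proj(\MnC)$ as the obstruction, and invokes Theorem~\ref{thm:obstruction}, merely leaving implicit the details you spell out (Lemma~\ref{lem:stonelimits}, Stone duality turning the colimit of Corollary~\ref{categorifiedkochenspecker} into a limit, and strictness of the initial locale).
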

\begin{proof}
  If one considers the diagram
  \[ \xymatrix{ \Cat{Boolean}\op \ar[r] \ar[d] & \Cat{Stone} \ar[d] \\
    \Cat{PBoolean}\op \ar[r]_-F & \Cat{Loc} } \]
  and the object  $\mathrm{Proj}(\MnC)$ in \Cat{PBoolean} (together with its diagram of commutative subalgebras in $\Cat{Boolean}$), we see that they are obstructed for every $n \geq 3$. Therefore they will be sent to the initial object by $F$.
\end{proof}

Traditional quantum logic, by which we mean the approach dating back to Birkhoff and von Neumann \cite{birkhoffvonneumann:logic}, considers orthomodular lattices. A lattice $L$ is called \emph{orthocomplemented}, if it comes equipped with a map $\perp \colon L \to L$ satisfying:
\begin{itemize}
\item $a \leq b \Rightarrow b^\perp \leq a^\perp$;
\item $(a^\perp)^\perp = a$;
\item $a \land a^\perp = 0$ and $a \lor a^\perp = 1$.
\end{itemize}
We call $a^\perp$ the \emph{orthocomplement} of $a$, and say that $a$ is commeasurable with $b$ (and write $a \commeas b$), if
\[ a = (a \land b) \lor (a \land b^\perp). \]
This relation is clearly reflexive, but need not be symmetric; if it is, we will call the lattice \emph{orthomodular}.\footnote{This is equivalent to the usual statement of the orthomodular law $a \leq b \Rightarrow b = a \lor (b \land a^\perp)$ by \cite[Theorem II.3.4]{beran:orthomodular}.} With lattice homomorphisms preserving orthocomplements as morphisms, orthomodular lattices form a category $\Cat{OrthoLat}$.

The previous no-go result extends to orthomodular lattices. This is due to several facts. First of all, every Boolean algebra is an orthomodular lattice. In fact, these are precisely the orthomodular lattices in which every two elements are commeasurable~\cite[Corollary II.4.6]{beran:orthomodular}.
Furthermore, projections $\Proj(\MnC)$ in $n$-dimensional complex Hilbert space can be identified with the subspace of $\mathbb{C}^n$ they project onto, and therefore form an orthomodular lattice~\cite[Section~III.4]{beran:orthomodular}: the order comes from subspace inclusion, and $\perp$ comes from orthocomplement. Now, the relation $\commeas$ gives every orthomodular lattice the structure of a partial Boolean algebra~\cite[Theorem II.4.5]{beran:orthomodular}. Projection lattices thus obtain partial Boolean algebra structure: projections $p$ and $q$ commute if and only if the subspaces $p(\mathbb{C}^n)$ and $q(\mathbb{C}^n)$ they project onto are commeasurable in the orthomodular lattice of linear subspaces~\cite[Exercise III.18]{beran:orthomodular}. Therefore also the two different notions of total (or commeasurable) subalgebra agree.

\begin{corollary}
  Any functor $\Cat{OrthoLat}\op \to \Loc$ that assigns to each Boolean algebra its Stone
  spectrum trivializes on $\mathrm{Proj}(\MnC)$ for $n \geq 3$.
  \qed
\end{corollary}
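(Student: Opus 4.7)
The plan is to mimic the proof of Corollary~\ref{cor:partialbool} essentially verbatim, substituting $\Cat{OrthoLat}\op$ for $\Cat{PBoolean}\op$. I would instantiate Theorem~\ref{thm:obstruction} with the commutative square
\[ \xymatrix{ \Cat{Boolean}\op \ar[r]^-{\Spec} \ar[d] & \Cat{Stone} \ar[d]^-{K} \\
    \Cat{OrthoLat}\op \ar[r]_-{G} & \Cat{Loc}, } \]
where the left vertical arrow is induced by the inclusion $\Cat{Boolean} \hookrightarrow \Cat{OrthoLat}$ (every Boolean algebra is orthomodular, and since the orthocomplement agrees with Boolean negation, Boolean homomorphisms are exactly the orthomodular-lattice homomorphisms between Boolean algebras), and where $K$ is the inclusion of Stone locales, which preserves limits by Lemma~\ref{lem:stonelimits}.

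The heart of the argument is to exhibit $\Proj(\MnC)$ as obstructed in the sense of Definition~\ref{def:obstruction}. I would take for the diagram $\mathcal{A}$ in $\Cat{Boolean}\op$ the opposite of $\cC(\Proj(\MnC))$, where $\Proj(\MnC)$ is regarded as a partial Boolean algebra. The paragraph preceding the statement supplies precisely what is needed: the commeasurable subalgebras of $\Proj(\MnC)$ computed inside the orthomodular lattice coincide with those computed inside the associated partial Boolean algebra. Hence the canonical inclusions assemble into a cocone in $\Cat{OrthoLat}$, i.e.\ a cone in $\Cat{OrthoLat}\op$ from $\Proj(\MnC)$ to $H\mathcal{A}$. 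Because Stone duality $\Spec \colon \Cat{Boolean}\op \to \Cat{Stone}$ is an equivalence, it preserves limits, so Corollary~\ref{categorifiedkochenspecker} identifies $\lim F\mathcal{A}$ with the Stone spectrum of the terminal Boolean algebra, which is the empty Stone locale---the initial object of $\Cat{Stone}$.

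It remains to discharge the last two hypotheses of Theorem~\ref{thm:obstruction}: $K$ sends the empty Stone locale to the empty locale, which is initial in $\Loc$, and this initial object is strict. The theorem then delivers the trivialisation of $G(\Proj(\MnC))$ for $n \geq 3$. The only step with any real content beyond Corollary~\ref{cor:partialbool} is the coincidence of the two notions of commutative subalgebra, which is not so much a hard obstacle as a bookkeeping point already handled by the author's preparatory remarks drawing on \cite{beran:orthomodular}; everything else is a mechanical reuse of the abstract machinery of Section~\ref{sec:abstract}.
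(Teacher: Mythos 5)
Your proposal is correct and is essentially the paper's own argument: the paper proves this corollary by repeating the proof of Corollary~\ref{cor:partialbool} with $\Cat{OrthoLat}\op$ in the bottom left corner, relying (as you do) on the preceding discussion that the total subalgebras of $\Proj(\MnC)$ as a partial Boolean algebra coincide with its Boolean subalgebras as an orthomodular lattice. Your write-up merely makes explicit the instantiation of Theorem~\ref{thm:obstruction} that the paper leaves implicit.
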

\begin{proof}
  Proved in the same way as the previous corollary, where this time we put $\Cat{OrthoLat}\op$ in the bottom left corner.
\end{proof}

Next, we turn to the Pierce spectrum, which assigns to a commutative ring the Stone space of its Boolean algebra of idempotents.

\begin{corollary}
  Any functor $\Cat{Ring}\op \to \Loc$ that assigns to each commutative ring its Pierce
  spectrum trivializes on all $\MnC$ for $n \geq 3$.
\end{corollary}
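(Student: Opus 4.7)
The plan is to follow the template of Corollary~\ref{cor:nogo:zariskiloc} nearly verbatim, exploiting the fact that on finite-dimensional commutative $\mathbb{C}$-algebras the Pierce and Zariski spectra coincide as functors to $\Loc$. Any $A \in \Cat{fcAlg}_{\mathbb{C}}$ is Artinian, so $A \cong \prod_{i=1}^{k} A_i$ with each $A_i$ local Artinian; both spectra return the $k$-point discrete locale, since each $A_i$ has a unique prime ideal and $\mathrm{Idp}(A) = \prod_i \mathrm{Idp}(A_i) = 2^k$, whose Stone dual is the $k$-atom discrete locale. Consequently the restricted Pierce spectrum is naturally isomorphic to $\mathrm{Hom}_{\Cat{cAlg}_{\mathbb{C}}}(-,\mathbb{C})$ taking values in discrete locales, and so preserves limits in the relevant sense.

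Next I would instantiate Proposition~\ref{prop:abstract} by the square
\[\xymatrix{
  \Cat{fcAlg}_{\mathbb{C}}\op \ar[r]^-{\mathrm{Pierce}} \ar@{^{(}->}[d] & \Loc \ar@{=}[d] \\
  \Cat{fAlg}_{\mathbb{C}}\op \ar[r]_-{G'} & \Loc,
}\]
where $G'$ denotes the restriction of $G$ along the inclusion $\Cat{fAlg}_{\mathbb{C}} \hookrightarrow \Cat{Ring}$. The right-hand vertical trivially preserves limits and initial objects, and the empty locale is strictly initial in $\Loc$, so Theorem~\ref{thm:obstruction} applies once $\MnC$ is shown to be obstructed; precomposing with $\Cat{fAlg}_{\mathbb{C}} \hookrightarrow \Cat{Ring}$ then transports the conclusion back to the original $G$.

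To obstruct $\MnC$, take $\mathcal{A} = \cC(\MnC)$, the diagram of commutative C*-subalgebras of $\MnC$; each lies in $\Cat{fcAlg}_{\mathbb{C}}$ and the inclusions form the required cocone to $\MnC$ in $\Cat{fAlg}_{\mathbb{C}}$. I would argue that the colimit of $\mathcal{A}$ in $\Cat{cRing}$ is the zero algebra by reusing the idea of Corollary~\ref{kochenspeckerforcstar}: any cocone to a commutative ring $R$ gives ring homomorphisms $C \to R$, and restriction to idempotents yields Boolean-algebra maps $\Proj(C) = \mathrm{Idp}(C) \to \mathrm{Idp}(R)$ for each $C$, since idempotents and projections coincide in a commutative C*-algebra. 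These assemble into a cocone from $\cC(\Proj(\MnC))$ to $\mathrm{Idp}(R)$ in $\Cat{Boolean}$, so Corollary~\ref{categorifiedkochenspecker} forces $\mathrm{Idp}(R)$ to be terminal, whence $R = 0$. Thus $\mathrm{Pierce}$ of the colimit is the empty locale, initial in $\Loc$, and Theorem~\ref{thm:obstruction} yields $G(\MnC) = \emptyset$ for $n \geq 3$.

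The main obstacle, as in the Zariski case, is essentially bookkeeping: one must identify the diagram $\{\Proj(C)\}_{C \in \cC(\MnC)}$ with $\cC(\Proj(\MnC))$ so that Corollary~\ref{categorifiedkochenspecker} applies cleanly. This is routine, because in $\MnC$ every Boolean algebra of pairwise commuting projections is the projection lattice of its linear span, itself a commutative C*-subalgebra, and conversely.
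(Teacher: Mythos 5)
Your strategy is genuinely different from the paper's: the paper proves this corollary by restricting $F$ to commutative von Neumann algebras and exhibiting a natural transformation from the restricted Pierce functor to the Gelfand spectrum functor (idempotents in a commutative C*-algebra being projections), so that $\lim F\cC(\MnC)$ maps into $\lim \Spec\,\cC(\MnC)$, which Section~\ref{sec:gelfand} already showed to be strictly initial in $\Loc$; you instead transplant the Zariski argument of Corollary~\ref{cor:nogo:zariskiloc}. Much of your version is fine: idempotents in a commutative C*-algebra are projections, a unital ring homomorphism restricts to a Boolean homomorphism on idempotents, every total Boolean subalgebra of $\Proj(\MnC)$ equals $\Proj(C)$ for its linear span $C$, and so any cocone from $\cC(\MnC)$ in $\Cat{cRing}$ induces a cocone from $\cC(\Proj(\MnC))$ in $\Cat{Boolean}$; Corollary~\ref{categorifiedkochenspecker} then indeed forces the colimit of $\cC(\MnC)$ in $\Cat{cRing}$ to be the zero ring.

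The gap is the final step, ``thus Pierce of the colimit is the empty locale, and Theorem~\ref{thm:obstruction} applies.'' With your square (identity on the right), Definition~\ref{def:obstruction} requires that the limit \emph{in $\Loc$} of the diagram of Pierce spectra of the commutative subalgebras be the initial locale; knowing that the colimit of the algebras is $0$ only yields this if the restricted Pierce functor preserves that (infinite) limit into $\Loc$, and your justification---restricted Pierce $\cong \mathrm{Hom}(-,\C)$ valued in discrete locales, ``so preserves limits in the relevant sense''---does not establish it. Representability gives limit preservation into $\Cat{Set}$, i.e.\ it shows the limit locale has no points; but a pointless locale need not be initial, which is exactly the pointfree subtlety this paper is about, and discrete locales are not closed under infinite limits in $\Loc$ (an infinite product of finite discrete locales is a Cantor-type Stone locale), while $\cC(\MnC)$ is an infinite diagram. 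The repair is short and uses ingredients you already invoke: each Pierce spectrum in the diagram is finite, hence a Stone locale, and by Lemma~\ref{lem:stonelimits} (with Lemma~\ref{lem:coherentlimits}) the limit in $\Loc$ is computed within Stone locales, where by Stone duality it is the Stone locale of the colimit in $\Cat{Boolean}$ of the finite Boolean algebras $\mathrm{Idp}(C)=\Proj(C)$; that colimit is trivial by Corollary~\ref{categorifiedkochenspecker} (via the same span-bookkeeping you describe), and the Stone locale of the trivial Boolean algebra is the empty locale. Equivalently, put the category of Stone locales in the top right corner with its inclusion into $\Loc$ as $K$, exactly parallel to Corollary~\ref{cor:nogo:gelfandloc} with $\Cat{KRLoc}$ replaced by Stone locales; with that in place your detour through the colimit in $\Cat{cRing}$ becomes unnecessary.
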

\begin{proof}
  Let $F: \Cat{Ring}\op \to \Loc$ be as in the statement. Let $\cC(\MnC)$ be the diagram of commutative self-adjoint subalgebras of $\MnC$. As usual, we will argue that $\lim F\cC(\MnC)$ in $\Loc$ is initial. Consider the restriction $\overline{F}$ of $F$ to $\Cat{cNeumann}$, and denote $G$ for the functor that sends a commutative von Neumann algebra to its Gelfand spectrum. Since every projection is an idempotent, and the Gelfand spectrum of a commutative von Neumann algebra is given by the Stone space on its projections, there is a natural transformation $\overline{F} \Rightarrow G$. So if $\lim G\cC(\MnC)$ is the (strict) initial object in $\Loc$, the same must be true for $\lim F\cC(\MnC) = \lim \overline{F}\cC(\MnC)$.
\end{proof}

\section{Circumventing obstructions}
\label{sec:circumventing}

It might be tempting to conclude from the above impossibility results that it is hopeless to look for a good notion of spectrum for noncommutative structures. But we strongly believe that this is the wrong conclusion to draw. What our results show is merely that a category of noncommutative spectra must have different limit behaviour from the known categories of commutative spectra. One of the central messages of category theory is that objects should be regarded as determined by their behaviour rather than by any internal structure. In other words, it is not the internal structure of objects that dictates what morphisms should preserve. It is the other way around: it is the morphisms connecting an object to others that determine that object's characteristics. Ideally, of course, both viewpoints coincide. But the latter viewpoint is better precisely when it is unclear what the objects should be. Historically, noncommutative spectra have almost always been pursued by generalizing the internal structure of commutative spaces (as objects). We believe the right, and optimistic, message to distill from our results is that one should let the search for noncommutative spectra be guided by morphisms instead. Indeed, the few proposals for noncommutative spectra that escape our obstructions have non-standard morphisms between them:
\begin{itemize}
\item There is a notion of noncommutative spectrum due to Akemann, Giles and Kummer~\cite{akemann:stone,gileskummer,akemann:gelfand}. It allows one to reconstruct the original C*-algebra, but the correspondence is only functorial for certain morphisms of C*-algebras.
\item The so-called process of Bohrification gives a functor from the category of C*-algebras to localed toposes~\cite{heunenlandsmanspitters:bohrification}. It involves some loss of information, however: one can only reconstruct the partial C*-algebra structure of the original C*-algebra~\cite{vdbergheunen:colim}. Indeed the natural morphisms in this setting are partial *-homomorphisms.
\item It is possible to construct a functor from the category of C*-algebras to the category of so-called quantum frames~\cite{rosicky:quantumframes}. These structures only take into account the Jordan structure of the original C*-algebra, and this is reflected in the choice of morphisms. Indeed, there is no nondegenerate functor between the categories of quantum frames and that of quantales, so there is no contradiction with our results.
\item A recent paper by Heunen and Reyes proposes a new notion of spectrum for arbitrary AW*-algebras~\cite{heunenreyes:activelattices}. It involves an action of the unitary group on the projection lattice, and therefore the natural morphisms are quite unlike those of topological spaces.
\end{itemize}

\bibliographystyle{plain}
\bibliography{nogo}

\end{document}